\documentclass{amsart}
\usepackage{hyperref}
\usepackage{geometry}
\usepackage{enumerate}
\usepackage{amsmath,amssymb,amsthm,mathrsfs,amsfonts,dsfont}
\usepackage{graphicx}
\usepackage{subfigure}
\usepackage{tikz}
\usetikzlibrary{trees,decorations.pathmorphing}
\usetikzlibrary{calc}
\usepackage{tikz}
\usetikzlibrary{shapes}
\usetikzlibrary{plotmarks}
\usetikzlibrary{arrows}
\usetikzlibrary{positioning}
\usepackage{tkz-graph}
\setlength{\parindent}{0pt}
\setlength{\parskip}{0.45em}
\theoremstyle{definition}
\newtheorem{definition}{Definition}[section]
\newtheorem{prop}[definition]{Proposition}
\newtheorem{thm}[definition]{Theorem}
\newtheorem{cor}[definition]{Corollary}
\newtheorem{lem}[definition]{Lemma}
\newtheorem{example}[definition]{Example}
\newtheorem{remark}[definition]{Remark}
\newtheorem{problem}{Problem}
\newcommand\intertrans{\mathrel{\rightleftarrows}}
\newcommand\Lg{\mathbf{L}}
\newcommand\LL{\mathcal{L}}
\newcommand\NN{\mathbb{N}}
\newcommand\cl[1]{\mathrm{#1}}
\newcommand\Th{\mathsf{Th}}
\newcommand\Cn{\mathsf{Cn}}
\newcommand\Cz{\mathsf{Cz}}
\newcommand\tr{\mathsf{tr}}

\newcommand\Fm{\mathsf{Fm}}

\newcommand\rank{\mathsf{rank}}

\def\Cd{\cl{Cd}}  
\def\Ad{\cl{Ad}}

\def\dd{\cl{d}}
\def\ddd{\vec{\cl{d}}}

\def\dCd{\vec{\cl{Cd}}{}}

\def\X{\cl{X}}
\def\E{\cl{E}}
\def\R{\cl{R}}
\def\Ss{\cl{S}}

\newcommand{\ax}[1]{\ensuremath{\mathsf{#1}}}

\newcommand\defeq{\mathrel{\stackrel{\makebox[0pt]{\mbox{\normalfont\tiny def}}}{=}}}

\begin{document}
\title[Distances between formal theories]
      {Distances between formal theories}
 \author[Friend et al.]{Mich{\`e}le Friend, Mohamed Khaled, Koen Lefever and Gergely Sz{\'e}kely}
\subjclass[2010]{Primary 03B99, 03C07, 03A10. Secondary 03G99, 03B80}
\keywords{Network of theories, degrees of non-equivalence, conceptual distance, relativistic and classical kinematics}
\begin{abstract}In the literature, there have been several methods and definitions for working out if two theories are ``equivalent'' (essentially the same) or not. In this article, we do something subtler. We provide means to measure distances (and explore connections) between formal theories. We introduce two main notions for such distances. The first one is that of \textit{axiomatic distance}, but we argue that it might be of limited interest. The more interesting and widely applicable notion is that of \textit{conceptual distance} which measures the minimum number of concepts that distinguish two theories. For instance, we use conceptual distance to show that relativistic and classical kinematics are distinguished by one concept only. We also develop further notions of distance, and we include a number of suggestions for applying and extending our project.
\end{abstract}
\maketitle
\section{Introduction}
It is well known that the theory of strict partial orders and the theory of partial orders are ``equivalent'', i.e., they have the same essential content. To capture this vague idea, defining a precise equivalence between theories, several formal definitions have been made, e.g., \emph{logical equivalence}, \emph{definitional equivalence}, \emph{categorical equivalence}, etc. Which theory is equivalent to which other theory depends on the point of view from which one decides to explore the equivalence between the theories in question.

In the last few decades, the concept of equivalence between theories (henceforth: ``theory-equivalence'') has become important for studying the connections between formal theories. Many interesting results have been derived from investigating such equivalence, e.g., \cite{AMNmutual}, \cite{Glymour}, \cite{Japaridze}, \cite{pinter}  and \cite{Vis06}. We can also look at the question starting from non-equivalence. Given two non-equivalent theories (according to any chosen definition of theory-equivalence), some natural questions arise: (1) Can these theories be modified into equivalent theories (in a non-trivial way)? (2) If this can be done, can we do it in finitely many steps? In other words, what is the degree of their non-equivalence?

In this article, we lay down the first steps of a research programme to answer these questions. In order to investigate some ways to measure how far two theories are from each other; we introduce a framework that can give a qualitative and quantitative analysis of the connections between formal theories. We focus on formal theories that are formulated in any of the following logical systems: sentential logic, ordinary first order logic (FOL), finite variables fragments of FOL and/or infinitary versions of FOL. We develop several notions for distances between theories, we discuss these notions and we make comparisons between them.

The idea is very simple: based on a symmetric relation capturing a notion of minimal change, we introduce a general way to define a distance on any class of objects (not just theories) equipped with an equivalence relation. The idea is a generalization of the distance between any two nodes in the same graph, in graph theory. After, we give particular examples when the given class is a class of theories and the equivalence relation is a fixed notion of theory-equivalence.

The first particular example, is that of logical equivalence. As a measure for the degree of logical non-equivalence,  we introduce the concept of \emph{axiomatic distance}. The idea is to count the minimum number of axioms that are needed to be added or ``removed'' to get from one theory to the other.\footnote{By ``removing an axiom'' here we
only mean the trivial converse of adding an axiom in the following sense: $T$
is a theory resulting from ``removing'' one axiom from $T'$ if $T'$ can
be reached from $T$ by adding one axiom.} Since any finite number of axioms can be
concatenated by conjunction resulting in only one axiom, one may think
that the axiomatic distance, if it is finite, between two given theories
$T$ and $T'$ must be $\le 2$, i.e., we need at most two steps to get $T$
from $T'$: one step for axiom addition and another one for axiom removal. This is why we have the intuition that axiomatic distance may not be very interesting, cf., Problem~\ref{prob:axdist} and Theorem~\ref{thm:axdist}.

Then we turn to definitional equivalence. Two theories are definitionally equivalent if they cannot be distinguished by a concept (a formula defining some notion). As a measure for the degree of definitional non-equivalence, we define \emph{conceptual distance}. This distance counts the minimum number of \textit{concepts} that separate two theories. We find that this distance is of special interest in the study of logic. We give examples and we count conceptual distance between some specific theories, see, e.g., Theorem~\ref{thm:non-discrete} and Theorem~\ref{NSEM}. We also explore a connection between conceptual distance and \emph{spectrum of theories} which is a central topic in model theory, cf., Theorem~\ref{prop:spectrum}.

In algebraic logic, Lindenbaum-Tarski algebras of logical theories (sometimes these are called concept algebras) are often introduced as the algebras of different concepts of the corresponding theories. Thus, counting concepts amounts to counting elements of Lindenbaum-Tarski algebras. In fact, our definition of conceptual distance herein is a careful translation of an algebraic distance between Lindenbuam-Tarski algebras. Such an algebraic distance allows us to define conceptual distance between theories in any algebraizable logic, e.g., modal logic and intuitionistic logic. This general algebraic distance (and its application on concept algebras) is planned to be investigated in details in a forthcoming algebra oriented paper.

Furthermore, we investigate the possible application of conceptual distance in the logical foundation of physical theories in ordinary first order logic. We prove that conceptual distance between classical and relativistic kinematics is one. In other words, only one concept distinguishes classical and relativistic kinematics: the existence of a class of observers who are at absolute rest. This is indeed an interesting result in its own right, not only for logicians but also for physicists. Such a result opens several similar questions about how many concepts (and what are they) differentiate two physical theories when their phenomena can be described in FOL.

In philosophy of physics, this might be important because, on the one hand it is clear that we are not presently converging towards one unified theory of physics in the sense of converging to one set of laws from which all the phenomena of physics can be derived. On the other hand, we can give logical foundation to several physical theories: Newtonian mechanics, relativity theories and some parts of quantum theory. Given these logical representations, we would like to know the exact relationship between physical theories. If we know this, then we can form an impression of how far we are from such a philosophical dream -- the dream of the unity of physics. Or, we can adjust our hopes and expectations, and rest content with a unity of science at a more general level: as a network of logical theories with precise relations between them.

With definitions and metrics on distance developed here, we have maps of the network of logical theories. When we draw such maps of networks, the topology may suggest very interesting and fruitful questions. For instance: if there is a distance other than zero or one, then is there already a known theory in between? or if not, we can ask what are the limitative properties of that theory and what is its philosophical significance? By engaging in such studies, we see the ``edge'' of the limitative results, and by examining this edge we more precisely understand the rapport between meta-logical limitative results and physical phenomena.

In the present paper, we assume familiarity with the basic notions of set theory. For instance, what is a set, a class, a relation, etc. The only difference is that in several occurrences in this paper, we decided not to distinguish different kinds of infinities. Therefore, together with the standard notion of cardinality, we are going to speak about the \emph{\textbf{size} of set $X$}, defined as follows:
\begin{equation*}
  \vert\vert X\vert\vert\defeq \begin{cases}
    k & \text{$X$ is finite and has exactly $k$-many elements,}\\
    \infty & \text{if $X$ is an infinite set.}
  \end{cases}
\end{equation*}
We also make use of von Neumann ordinals. For example, $\omega$ is the smallest infinite ordinal, sometimes we denote $\omega$ by $\NN$ to indicate that it is the set of natural numbers (non-negative integers).
\section{Notions of logic} 
In the course of this paper, let $\alpha$ and $\beta\leq\alpha+1$ be two fixed ordinals. We consider a natural generalisation of ordinary first order logic, we denote it by $\Lg_{\alpha}^{\beta}$, which is inspired from the definitions and the discussions in \cite[section 4.3]{HMT85}. Roughly, the formulas of $\Lg_{\alpha}^{\beta}$ uses a fixed set of individual variables $\{v_i:i\in\alpha\}$ and relation symbols of rank strictly less than $\beta$. For simplicity, we assume that our languages do not contain any function symbols and/or constant symbols. 

In particular, $\Lg_0^1$ is sentential (propositional) logic, while $\Lg_{\omega}^{\omega}$ is ordinary first order logic. The so-called finite variables fragments of first order logic are the logics $\Lg_n^{n+1}$, for finite ordinals $n$'s. When $\alpha$ and $\beta$ are infinite, $\Lg_{\alpha}^{\beta}$ is called infinitary logic. Throughout, since $\alpha$ and $\beta$ are fixed, languages, theories, etc., are understood to be languages for $\Lg_{\alpha}^{\beta}$, theories in $\Lg_{\alpha}^{\beta}$, etc.

\subsection{The syntax of $\Lg_{\alpha}^{\beta}$}
More precisely, A \emph{\textbf{language} $\LL$ for $\Lg_{\alpha}^{\beta}$} is a set of relation symbols such that each relation symbol $R\in\LL$ is assigned a rank $\rank(R)<\beta$. Relation symbols of rank $0$ are called \emph{\textbf{sentential constants}}. To construct the formulas of language $\LL$, we also need some other symbols: equality ``$=$'' (we deal with quantifier logics with identity), brackets ``$($'' and ``$)$'', conjunction ``$\land$'', negation ``$\lnot$'' and the existential quantifier ``$\exists$''. We also use the necessary symbols to write sequences of variables $(v_{i_m}:m\in I)$, for any indexing set $I\subseteq \alpha$. \emph{The set of \textbf{formulas} $\Fm$ of $\LL$} is the smallest set that satisfies:
\begin{enumerate}
\item[(a)] $\Fm$ contains each \emph{\textbf{basic formula} of $\LL$}, where the basic formulas are the following two types of formulas:
\begin{enumerate}
\item[(i)] The equalities $v_i=v_j$, for any $i,j\in\alpha$.
\item[(ii)] $R(v_{i_m}:m<\rank(R))$, for any relation symbol $R$. 
\end{enumerate}
\item[(b)] $\Fm$ contains $\varphi\land\psi$, $\lnot\varphi$ and $\exists v_i\,\varphi$, for each $\varphi,\psi\in \Fm$.
\end{enumerate}
As usual, we use the following abbreviations.
\begin{itemize}
\item If $P$ is a sentential constant,  then we just write $P$ instead of $P()$.
\item If $R$ is a relation symbol of finite positive rank, say $k$,  then we write $R(v_{i_0},\ldots,v_{i_{k-1}})$ instead of $R(v_{i_m}:m<k)$.
\item We us disjunction, implication, equivalence and universal quantifier:
\begin{eqnarray*}
&\varphi\lor\psi\defeq\lnot(\lnot\varphi\land\lnot\psi) \hspace{0.5cm} & \varphi\rightarrow\psi\defeq\lnot(\varphi\land\lnot\psi)\\
&\varphi\leftrightarrow\psi\defeq(\varphi\rightarrow\psi)\land(\psi\rightarrow\varphi) \hspace{0.5cm} & \forall v_i\,\varphi\defeq\lnot(\exists v_i\, \lnot\varphi)
\end{eqnarray*}
\item We also use grouped conjunction and disjunction: Empty disjunction is defined to be $\varphi\land\lnot\varphi$ and empty conjunction is defined to be $\varphi\lor\lnot\varphi$ (for any arbitrary but fixed formula $\varphi\in\Fm$). Let $\varphi_0,\ldots,\varphi_m\in\Fm$, then
$$\bigvee_{0\leq i\leq m}\varphi_i\defeq\varphi_0\lor\cdots\lor\varphi_m \ \ \text{ and } \ \ \bigwedge_{0\leq i\leq m}\varphi_i\defeq\varphi_0\land\cdots\land\varphi_m.$$
\end{itemize}
\subsection{The semantics of $\Lg_{\alpha}^{\beta}$}
A \emph{\textbf{model} $\mathfrak{M}$ for language $\LL$} is a non-empty set $M$ enriched with operations $R^{\mathfrak{M}}\subseteq {M^{\rank(R)}}$, for each $R\in\LL$ (for a sentential constant $P$, $P^{\mathfrak{M}}\subseteq {M^0}=\{\emptyset\}$).\footnote{So the meaning $P^{\mathfrak{M}}$ of a sentential constant $P$ can be either \textsf{true} ($T=\{\emptyset\}$) or \textsf{false} ($F=\emptyset$).} An \emph{\textbf{assignment} in $\mathfrak{M}$} is a function $\tau$ that assigns for each variable an element of the set $M$. Let $\varphi\in \Fm$ be any formula. The satisfiability relation $\mathfrak{M},\tau\models\varphi$ is defined recursively as follows:
\begin{eqnarray*}
\mathfrak{M},\tau\models R(v_{i_m}:m<\rank(R)) &\text{iff}& (\tau(v_{i_m}):m<\rank(R))\in R^{\mathfrak{M}},\footnotemark\\
\mathfrak{M},\tau\models v_i=v_j &\text{iff}& \tau(v_i)=\tau(v_j),\\
\mathfrak{M},\tau\models\varphi\land\psi &\text{iff}& \mathfrak{M},\tau\models\varphi \text{ and } \mathfrak{M},\tau\models\psi,\\
\mathfrak{M},\tau\models\lnot\varphi &\text{iff}& \mathfrak{M},\tau\not\models\varphi,\\
\mathfrak{M},\tau\models\exists v_i\,\varphi &\text{iff}& \text{ there is }a\in M\text{ such that }\mathfrak{M},\tau[v_i\mapsto a]\models\varphi,
\end{eqnarray*}
\footnotetext{If $\rank(P)$ is $0$, then $(\tau(v_{i_m}):m<\rank(P))$ is the empty sequence $\emptyset$. Hence $\mathfrak{M},\tau\models P$ iff $P^{\mathfrak{M}}$ is \textsf{true}.}
where $\tau[v_i\mapsto a]$ is the assignment which agrees with $\tau$ on every variable except $\tau[v_i\mapsto a](v_i)=a$. The cardinality of $\mathfrak{M}$ is defined to be the cardinality of $M$. A formula $\varphi$ is said to be \emph{\textbf{true} in $\mathfrak{M}$}, in symbols $\mathfrak{M}\models\varphi$, iff $\mathfrak{M},\tau\models\varphi$, for every assignment $\tau$ in $\mathfrak{M}$. A formula $\varphi$ is said to be a \emph{\textbf{tautology}} iff it is true in every model for $\LL$. The \emph{\textbf{theory} of $\mathfrak{M}$} is defined as:
$$\Th(\mathfrak{M})\defeq\{\varphi\in \Fm:\mathfrak{M}\models\varphi\}.$$

We say that \emph{two models $\mathfrak{M}$ and $\mathfrak{N}$ for language $\LL$ are \textbf{isomorphic}} iff there is a bijection $f:M\rightarrow N$ between their underlying sets that respects the meaning of the relation symbols, i.e., for each $R\in\LL$, 
$$(a_i:i<\rank(R))\in R^{\mathfrak{M}}\iff (f(a_i):i<\rank(R))\in R^{\mathfrak{N}}.$$
\subsection{Theories in the logic $\Lg_{\alpha}^{\beta}$}
\begin{definition}
Suppose that $\LL$ is a language and let $\Fm$ be its set of formulas. A \emph{\textbf{theory} $T$ of $\LL$} is a set of formulas (subset of $\Fm$).
\end{definition}

We use the same superscripts and subscripts for theories and their corresponding languages and formulas. For example, if we write $T'$ is a theory, then we understand that $T'$ is a theory of language $\LL'$ whose set of formulas is $\Fm'$. A \emph{\textbf{model} for theory $T$} is a model for $\LL$ in which every $\psi\in T$ is true. We say that theory $T$ is \emph{\textbf{consistent}} iff there is at least one model for $T$.
\begin{definition}Let $T$ be a theory and let $\kappa$ be any cardinal. The \emph{\textbf{spectrum} of $T$}, in symbols  $I(T,\kappa)$,  is the number of its different models (up to isomorphism) of cardinality $\kappa$. This number is defined to be $\infty$ if $T$ has infinitely many non-isomorphic models of cardinality $\kappa$.
\end{definition}
We say that \emph{a fomrula $\varphi$ is a \textbf{theorem} of theory $T$}, in symbols $T\models\varphi$, iff $\varphi$ is true in every model for $T$. The \emph{set of \textbf{consequences} of theory $T$} is defined as follows:
$$\Cn(T)\defeq\{\varphi\in \Fm: T\models \varphi\}.$$

\begin{definition}
Two theories $T_1$ and $T_2$ are called \emph{\textbf{logically equivalent}}, in symbols $T_1\equiv T_2$, iff they have the same consequences, i.e., $\Cn(T_1)=\Cn(T_2)$.
\end{definition}

\subsection{More notions for theory-equivalence}
A \emph{\textbf{translation} of language $\LL_1$ into language $\LL_2$} is a map $\tr: \Fm_1\rightarrow \Fm_2$ such that the following are true for every $\varphi,\psi\in\Fm$ and every $v_i,v_j$.
\begin{itemize}
\item $\tr(v_i=v_j)$ is $v_i=v_j$.
\item $\tr$ commutes with the Boolean connectives: $$\tr(\neg\varphi)=\neg \tr(\varphi) \ \text{ and } \ \tr(\varphi\land\psi)=\tr(\varphi)\land \tr(\psi).$$
\item Finally, $\tr(\exists v_i \, \varphi)=\exists v_i \, \tr(\varphi)$.\footnote{
In the case of ordinary first order logic (when $\alpha=\beta=\omega$), to define a translation $\tr:\Fm_1\rightarrow\Fm_2$, it suffices to define $\tr$ on the basic formulas in $\Fm_1$ of the form $v_i=v_j$ and $R(v_0,\ldots, v_{m-1})$. Then, using Tarski's substitution observation, we can define
\begin{multline*}\tr(R(v_{i_1},\ldots,v_{i_{m}}))=\exists v_0 (v_0=y_1\land\cdots\land\exists v_{m-1}(v_{m-1}=y_{m}\land \\ \exists y_1(y_1=v_{i_1}\land\cdots\land\exists y_{m}(y_{m}=v_{i_{m}}\land \tr(R(v_0,\ldots,v_{m-1})))))),
\end{multline*}
where $y_i=v_{l+i}$ and $l$ is the maximum of $0,\ldots,m-1,i_1,\ldots,i_{m}$. This can be extended in a unique way to a translation that covers the whole $\Fm_1$.\label{footnote:Tarski}}
\end{itemize}

\begin{definition}\label{defeq}
Suppose that $T_1$ and $T_2$ are theories in languages $\LL_1$ and $\LL_2$, respectively, and $\tr$ is a translation of $\LL_1$ into $\LL_2$. The translation $\tr$ is said to be an \emph{\textbf{interpretation} of $T_1$ into $T_2$} iff it maps theorems of $T_1$ into theorems of $T_2$, i.e., for each formula $\varphi\in \Fm_1$,
$$T_1 \models \varphi \implies T_2\models \tr(\varphi).$$
\begin{enumerate}[(a)]
\item[(a)] An interpretation $\tr$ of  $T_1$ into $T_2$ is called \emph{\textbf{a faithful interpretation} of $T_1$ into $T_2$} iff  for each formula $\varphi\in \Fm_1$,
$$T_1 \models \varphi \iff T_2\models \tr(\varphi).$$ 
\item[(b)] An interpretation $\tr_{12}$ of $T_1$ into $T_2$ is called \emph{\textbf{a definitional equivalence} between $T_1$ and $T_2$} iff there is an interpretation $\tr_{21}$ of $T_2$ into $T_1$ such that  
\begin{itemize}
\item $T_1 \models \tr_{21}\big(\tr_{12}(\varphi)\big) \leftrightarrow \varphi$,
\item $T_2 \models \tr_{12}\big(\tr_{21}(\psi)\big) \leftrightarrow\psi$.
\end{itemize}
for every $\varphi\in \Fm_1$ and $\psi\in \Fm_2$. In this case, $\tr_{21}$ is also a definitional equivalence.
\end{enumerate}
\end{definition}
\begin{definition}
Two theories $T_1$ and $T_2$ are said to be \emph{\textbf{definitionally equivalent}}, in symbols $T_1 \intertrans T_2$, iff there is a definitional equivalence between them.
\end{definition}
In the literature, there are several ways to define definitional equivalence. Here, we use a variant of the definition in \cite[Definition 4.3.42 and Theorem 4.3.43]{HMT85}. For a discussion on the different definitions of definitional equivalence, see \cite{LSBH}, and we refer to \cite{Vis06} for a category theory based discussion.
\begin{prop}\label{prop:faithful}
Let $T_1$ and $T_2$ be two theories and suppose that $\tr_{12}:\Fm_1\rightarrow\Fm_2$ is a definitional equivalence between $T_1$ and $T_2$, then $\tr_{12}$ is also a faithful interpretation. 
\end{prop}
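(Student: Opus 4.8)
The plan is to unwind the definitions and observe that only one of the two implications in the definition of faithfulness requires genuine work. Since a definitional equivalence $\tr_{12}$ is, by definition, an interpretation of $T_1$ into $T_2$, the forward direction $T_1 \models \varphi \implies T_2 \models \tr_{12}(\varphi)$ holds immediately for every $\varphi \in \Fm_1$. Hence the entire content of the proposition lies in establishing the converse implication.

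For the converse, I would fix $\varphi \in \Fm_1$ and assume $T_2 \models \tr_{12}(\varphi)$, aiming to derive $T_1 \models \varphi$. The key is to exploit the companion translation $\tr_{21}$ supplied by the definitional equivalence. Since $\tr_{21}$ is an interpretation of $T_2$ into $T_1$, applying its defining implication to the theorem $\tr_{12}(\varphi)$ of $T_2$ yields $T_1 \models \tr_{21}(\tr_{12}(\varphi))$.

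Now I would invoke the first of the two round-trip conditions in the definition of definitional equivalence, namely $T_1 \models \tr_{21}(\tr_{12}(\varphi)) \leftrightarrow \varphi$. Combining this with $T_1 \models \tr_{21}(\tr_{12}(\varphi))$ gives $T_1 \models \varphi$ by a semantic modus ponens: in any model $\mathfrak{M}$ of $T_1$ and under any assignment, both $\tr_{21}(\tr_{12}(\varphi))$ and the biconditional $\tr_{21}(\tr_{12}(\varphi)) \leftrightarrow \varphi$ are satisfied, whence $\varphi$ is satisfied as well; since this holds across all models of $T_1$, we conclude $T_1 \models \varphi$.

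I expect essentially no obstacle here beyond careful bookkeeping: the statement follows by chaining the interpretation property of $\tr_{21}$ with the first round-trip identity, and the only step requiring a (trivial) semantic argument is the passage from $T_1 \models \tr_{21}(\tr_{12}(\varphi))$ together with the biconditional to $T_1 \models \varphi$. It is worth noting that the second round-trip condition, phrased in terms of $\tr_{12}(\tr_{21}(\psi))$ for $\psi \in \Fm_2$, is not needed for this direction; this reflects the asymmetry that faithfulness of $\tr_{12}$ depends only on $\tr_{21}$ acting as a left inverse up to $T_1$-provable equivalence.
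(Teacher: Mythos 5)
Your proof is correct and follows essentially the same route as the paper's: the forward implication is immediate from $\tr_{12}$ being an interpretation, and the converse is obtained by applying the companion interpretation $\tr_{21}$ to the theorem $\tr_{12}(\varphi)$ and then using the round-trip condition $T_1 \models \tr_{21}\big(\tr_{12}(\varphi)\big) \leftrightarrow \varphi$ to conclude $T_1 \models \varphi$. Your added remarks on the semantic modus ponens step and on the fact that only the first round-trip condition is needed are fine elaborations of the same argument.
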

\begin{proof}Let $T_1$ and $T_2$ be two theories, and let $\tr_{12}:\Fm_1\rightarrow\Fm_2$ be a definitional equivalence between them. Let  $\varphi\in \Fm_1$, we should show that $T_1 \models \varphi$ iff $T_2\models \tr_{12}(\varphi).$ Since $\tr_{12}$ is an interpretation, we have that $T_1 \models \varphi$ implies $T_2\models \tr_{12}(\varphi)$.
To show the converse, let us assume that $T_2\models \tr_{12}(\varphi)$. By Definition~\ref{defeq}, there is an interpretation $\tr_{21}$ of $T_2$ into $T_1$ such that  $T_1 \models \tr_{21}\big(\tr_{12}(\varphi)\big) \leftrightarrow \varphi$. Since $\tr_{21}$ is an interpretation and $T_2\models \tr_{12}(\varphi)$, we have $T_1 \models \tr_{21}\big(\tr_{12}(\varphi)\big)$. Consequently, $T_1 \models \varphi$ since $T_1 \models \tr_{21}\big(\tr_{12}(\varphi)\big) \leftrightarrow \varphi$; and this is what we need to show.
\end{proof}

\begin{definition}\label{def-con-ext}
Let $T_1$ and $T_2$ be two theories. We say that \emph{$T_2$ is a \textbf{conservative extension} of $T_1$}, in symbols $T_1\sqsubseteq T_2$, iff $\Fm_1\subseteq \Fm_2$ and, for all $\varphi\in \Fm_1$, $T_2\models \varphi\iff T_1\models \varphi$.  
\end{definition}
We note that $T_1\sqsubseteq T_2$ iff the identity translation $id:\Fm_1\rightarrow \Fm_2$ is a faithful interpretation. It is also worth mentioning that 
$T_1\sqsubseteq T_2\iff T_1\equiv \Cn(T_2)\cap\Fm_1$.

\section{Cluster networks \& Step distance}\label{sd}
Now, we introduce a general way of defining a distance on any given class $\X$. We note that our target is to define distances on the class of all theories, thus we need to work with classes which are not necessarily sets. 

\begin{definition}
By a \textbf{cluster} $(\X,\E)$ we mean a class $\X$ equipped with an equivalence relation $\E$.\footnote{\label{classnote}All definitions in this section can be formulated within von Neumann--Bernays--G\"odel set theory (NBG). Of course, ordered pairs of proper classes cannot be formulated even in NBG, but we do not really need ordered pairs here. Our definitions can be understood as follows: ``for all classes $\X$, $\E$, etc., having certain properties there are classes $\dd$, etc., such that...''. We use the notations $(-,-)$ only to make our statements easier to be read and our proofs easier to be followed.}
\end{definition} 

We are interested in distances according to which some different objects are indistinguishable. Indeed, it is natural to treat equivalent theories as if they were of distance $0$ from each other. As we mentioned in the introduction, there are several notions of equivalence between theories. Such equivalence thus can be represented in the cluster of theoreis by the relation $\E$.

\begin{definition}
A \emph{\textbf{cluster network}} is a triple $(\X,\E,\Ss)$, where $(\X,\E)$ is
a cluster and $\Ss$ is a symmetric relation on $\X$.\footnote{See footnote~\ref{classnote} above.
}
\end{definition}

Given a cluster network $(\X,\E,\Ss)$. A \emph{\textbf{path} leading from $T\in \X$ to $T'\in \X$ in $(\X,\E,\Ss)$} is a finite sequence $b_1,\ldots, b_m$ of $0$'s and $1$'s such that there is a sequence $T_0,\ldots,T_m$ of members of $X$ with $T_0=T$, $T_m=T'$ and, for each $1\leq i\leq m$,
$$b_{i}=0\iff T_{i-1} \, \E \, T_{i} \ \ \text{ and } \ \ b_{i}=1\iff T_{i-1}\, \Ss\, T_{i}.$$ 
The \emph{\textbf{length} of this path} is defined to be $\sum_{i=1}^m b_i$. Two objects \emph{$T,T'\in \X$ are \textbf{connected} in $(\X,\E,\Ss)$} iff there is a path leading from one of them to the other in $(\X,\E,\Ss)$. 

\begin{definition}\label{def-step-distance}
Let $\mathcal{X}=(\X,\E,\Ss)$ be a cluster network. The \emph{\textbf{step distance} on $\mathcal{X}$} is the function $\dd_{\mathcal{X}}:\X\times \X\rightarrow \NN\cup\{\infty\}$ defined as follows. For each $T,T'\in \X$:
\begin{itemize}
\item If $T$ and $T'$ are not connected in $(\X,\E,\Ss)$, then $\dd_{\mathcal{X}}(T,T')\defeq\infty$.
\item If $T$ and $T'$ are connected in $(\X,\E,\Ss)$, then $$\dd_{\mathcal{X}}(T,T')\defeq\min\lbrace k\in\NN: \exists \text{ a path leading from $T$ to $T'$ whose length is $k$}\rbrace.$$
\end{itemize}
\end{definition}

The equivalence relation $\E$ represents pairs that cannot be
distinguished by the step distance, while the symmetric relation $\Ss$
represents the pairs of objects that are (at most) one step away from each other. The step distance then counts the minimum number of steps needed to reach an object starting from another one. We may need to say that infinitely many steps are needed, so we allow $\infty$ in the range of the step distance.

\begin{example}
Let $\X$ be any class, let $\E$ be the identity relation and let $\Ss=\X\times \X$. Then, $\mathcal{X}=(\X,\E,\Ss)$ is a cluster network and its step distance is the following discrete distance:
\begin{equation*}
\dd_{\mathcal{X}}(T,T')=\begin{cases}
0 & \text{ if }  T=T',\\
1 & \text{ if } T\not=T'.
\end{cases}
\end{equation*}
\end{example}

\begin{thm}\label{thm:min-path}
Let $\mathcal{X}=(\X,\E,\Ss)$ be a cluster network and let $\dd_{\mathcal{X}}:\X\times \X\rightarrow\NN\cup\{\infty\}$ be the step distance on $\mathcal{X}$. The following are true for each $T_1,T_2,T_3\in \X$:
 \begin{enumerate}
\item[(a)] $\dd_{\mathcal{X}}(T_1,T_2)\geq 0$, and $\dd_{\mathcal{X}}(T_1,T_2)=0\iff T_1 \, \E \, T_2$.
\item[(b)] $\dd_{\mathcal{X}}(T_1,T_2)=\dd_{\mathcal{X}}(T_2,T_1)$.
\item[(c)] $\dd_{\mathcal{X}}(T_1,T_2)\leq \dd_{\mathcal{X}}(T_1,T_3)+\dd_{\mathcal{X}}(T_3,T_2)$.
\end{enumerate}
\end{thm}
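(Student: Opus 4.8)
The plan is to verify the three metric-like properties of step distance directly from Definition~\ref{def-step-distance}, treating the disconnected case (where the distance is $\infty$) separately throughout. Recall that $\E$ is an equivalence relation and $\Ss$ is symmetric, and that the length of a path $b_1,\ldots,b_m$ is $\sum_{i=1}^m b_i$, counting only the $\Ss$-steps.

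For part~(a), non-negativity is immediate since lengths are sums of $0$'s and $1$'s. For the equivalence $\dd_{\mathcal{X}}(T_1,T_2)=0\iff T_1\,\E\,T_2$, the forward direction is the delicate one: a path of length $0$ means every bit $b_i$ is $0$, so each consecutive pair $T_{i-1},T_i$ satisfies $T_{i-1}\,\E\,T_i$; since $\E$ is transitive (and reflexive, to handle the degenerate path), chaining these gives $T_1\,\E\,T_2$. Conversely, if $T_1\,\E\,T_2$, then the single-bit path $b_1=0$ witnesses distance $0$. I would also note that connectedness holds in this case, so we are genuinely in the second clause of the definition.

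For part~(b), symmetry follows from reversing paths: given a path $b_1,\ldots,b_m$ realised by $T_0,\ldots,T_m$ from $T_1$ to $T_2$, the reversed sequence $T_m,\ldots,T_0$ together with bits $b_m,\ldots,b_1$ is a path from $T_2$ to $T_1$ of the same length, using that both $\E$ (being symmetric as an equivalence relation) and $\Ss$ (symmetric by hypothesis) are preserved under reversal. Hence the sets of achievable lengths coincide, so the minima agree; and if no path exists one way, none exists the other, giving $\infty=\infty$.

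For part~(c), the triangle inequality, I would concatenate paths: a path from $T_1$ to $T_3$ followed by a path from $T_3$ to $T_2$ yields a path from $T_1$ to $T_2$ whose length is the sum of the two lengths, so the minimum over all $T_1$-to-$T_2$ paths is at most $\dd_{\mathcal{X}}(T_1,T_3)+\dd_{\mathcal{X}}(T_3,T_2)$. The main subtlety is the bookkeeping for the $\infty$ cases: if either summand on the right is $\infty$ the inequality is trivial, and the concatenation argument shows that if both right-hand distances are finite then $T_1$ and $T_2$ are connected, so the left-hand side is finite too. I expect the only real obstacle to be stating this case analysis cleanly and making the path-concatenation bound on lengths precise, since everything else reduces to the reflexivity, symmetry, and transitivity of $\E$ together with the symmetry of $\Ss$.
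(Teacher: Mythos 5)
Your proposal is correct and follows essentially the same route as the paper's own proof: single $0$-bit paths plus transitivity of $\E$ for part~(a), path reversal via symmetry of $\E$ and $\Ss$ for part~(b), and path concatenation for part~(c). Your extra bookkeeping for the $\infty$ cases is a welcome refinement of details the paper leaves implicit, but it does not change the argument.
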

\begin{proof}    
Let $\mathcal{X}=(\X,\E,\Ss)$ and $\dd_{\mathcal{X}}$ be as required. Let $T_1,T_2,T_3\in \X$.
\begin{enumerate}
\item[(a)] Clearly, $\dd_{\mathcal{X}}(T_1,T_2)\ge 0$ for any two $T_1,T_2\in \X$, and $\dd_{\mathcal{X}}(T_1,T_2)=0$ if $T_1 \, \E \, T_1$ because then $0$ is a path from $T_1$ to $T_2$ in $\mathcal{X}$. If $\dd_{\mathcal{X}}(T_1,T_2)=0$, then there is a path $0,\ldots,0$ from $T_1$ to $T_2$ in $\mathcal{X}$. So there is a sequence $T'_0,\ldots,T'_m\in \X$ such that $T_1=T'_0$, $T_2=T'_m$ and $T'_{i-1} \, \E \, T'_{i}$ for each $1\leq i\leq m$. Hence, $T_1\, \E \, T_2$ since $\E$ is transitive.
\item[(b)] The symmetry is satisfied because $\E$ and $\Ss$ are symmetric relations. Hence, if $b_1,\ldots,b_m$ is a path leading from $T_1$ to $T_2$ in $\mathcal{X}$, then $b_m,\ldots,b_1$ is a path leading from $T_2$ to $T_1$ in $\mathcal{X}$.
\item[(c)] The triangle inequality $\dd_{\mathcal{X}}(T_1,T_2)\leq \dd_{\mathcal{X}}(T_1,T_3)+\dd_{\mathcal{X}}(T_3,T_2)$ follows from the definition because, if $b_1,\ldots,b_m$ is a path leading from $T_1$ to $T_3$ in $\mathcal{X}$ and $c_1,\ldots,c_k$ is a path leading from $T_3$ to $T_2$ in $\mathcal{X}$, then $b_1,\ldots,b_m,c_1,\ldots,c_k$ is a path leading from $T_1$ to $T_2$ in $\mathcal{X}$.\qedhere
\end{enumerate}
\end{proof}

\begin{remark}\label{rem}
Let $\mathcal{X}=(\X,\E,\Ss)$ and $\mathcal{X}'=(\X',\E',\Ss')$ be cluster networks such that $\X\subseteq \X'$, $\E\subseteq\E'\cap (\X\times \X)$, and $\Ss\subseteq\Ss'\cap (\X\times \X)$.  Since every path in $\mathcal{X}$ is contained in $\mathcal{X}'$, it is easy to see that $\dd_{\mathcal{X}}(T_1,T_2)\ge  \dd_{\mathcal {X}'}(T_1,T_2)$ for each $T_1,T_2\in \X$.
\end{remark}

Now, we use the above general settings to define distances between theories. Before we start, we need the following convention: Suppose that we are given two theories $T$ and $T'$. We write \label{implication-theories}$T\leftarrow T'$ iff there is $\varphi\in \Fm$ such that $T\cup\{\varphi\}\equiv T'$. We also write $T-T'$ iff either $T\leftarrow T'$ or $T'\leftarrow T$. Conventionally, we call the relation $\leftarrow$ \emph{\textbf{axiom adding}}, while the converse relation $\rightarrow$ is called \emph{\textbf{axiom removal}}. It is easy to see that the following are true for any theories $T_1,T_2$ and $T_3$.
\begin{eqnarray}
T_1\leftarrow T_2 \ \& \ T_2\leftarrow T_3 &\implies& T_1\leftarrow T_3,\label{lequiv}\\
T_1\equiv T_2 \ \& \ T_2\leftarrow T_3 &\implies& T_1\leftarrow T_3,\label{omittingequiv1}\\
T_1\leftarrow T_2 \ \& \ T_2\equiv T_3 &\implies& T_1\leftarrow T_3.\label{omittingequiv2}
\end{eqnarray}

\begin{definition} \label{def-axiomatic-distance}
Let $\X$ be a class of some theories (in the logic $\Lg_{\alpha}^{\beta}$) and consider the cluster network $(\X,\equiv,-)$. We call the step distance on this cluster network \emph{\textbf{axiomatic distance} on $\X$}. This step distance will be denoted by $\Ad_{\X}$.
\end{definition}

Let $\X$ be a class of theories. We note the following. If there is a path between $T,T'\in\X$ in the cluster network $(\X,\equiv,-)$, then both $T$ and $T'$ must be formulated in the  same language. In other words, if $T,T'$ are formulated on different languages, then $\Ad_{\X}(T,T')=\infty$. This is because two theories can be logically equivalent only if they are formulated on the same language. 

\begin{example}\label{ad-inc}Suppose that $\alpha\geq 1$ or $\beta\geq 1$. Let $\X$ be a class of theories. Let $T,T_{\bot}\in \X$ be two theories formulated in the same language. Suppose that $T$ is consistent while $T_{\bot}$ is inconsistent. Then, adding a contradiction to $T$ ensures that $\Ad_{\X}(T,T_{\bot})=1$. 
\end{example}

\begin{example}\label{ad-empty}
Let $\X$ be a class of theories. Let $T,\emptyset\in\X$ be two theories formulated in the same language such that $\emptyset$ is an empty theory (i.e., empty set of formulas). Suppose that $T$ is finitely axiomatizable, then we have either $$\Ad_{\X}(T,\emptyset)=1 \ \text{ or } \ T\equiv \emptyset.$$
\end{example}

Thus, in the class of all theories, the axiomatic distance between any two finitely axiomatizable theories is $\leq 2$.
\begin{example}Suppose that $\alpha\ge 3$ and $\beta\geq 3$. Let $\X$ be the set of all consistent theories of binary relations, let $T_P$ be the theory of partial orders, and let $T_E$ be the theory of equivalence relations. Then $\Ad_{\X}(T_P,T_E)=2$. Clearly, $\Ad_{\X}(T_P,T_E)\ge 2$ because none of $T_p$ or $T_E$ implies the other, and, by Example~\ref{ad-empty} and Theorem~\ref{thm:min-path} (c), $\Ad_{\X}(T_P,T_E)\le \Ad_{\X}(T_P,\emptyset) + \Ad_{\X}(\emptyset,T_E)=2$.
\end{example}
This gives us the intuition that, in most of the cases, the axiomatic distance is either $0$, $1$, $2$ or $\infty$. For example, in any class $\X$ containing only complete and consistent theories, $\Ad_{\X}$ is either $0$ or $\infty$ because adding an axiom to a complete theory is either results in an equivalent theory or in an inconsistent one. In the remaining of this section, we assume that $\alpha\geq 1$ or $\beta\geq 1$.

\begin{problem}\label{prob:axdist}
Let $\X$ be the class of all consistent theories in $\Lg_{\alpha}^{\beta}$. Is it true that, if the axiomatic distance between $T,T'\in\X$ is finite, then it must be $\leq 2$?
\end{problem}

Now, let us try to answer the above problem. We define the properties illustrated in Figure~\ref{fig:amalgamation}.  Let $\X$ be a class of theories. We say that \emph{$\X$ has the \textbf{theory amalgamation property}} iff for each $T\in\X$, if there are $T_1,T_2\in\X$ such that $T_1\rightarrow T\leftarrow T_2$, then there is $T'\in \X$ such that $T_1\leftarrow T'\rightarrow T_2$. Analogously, we say that $\X$ has the \emph{\textbf{theory co-amalgamation property}} iff for each $T\in\X$, if there are $T_1,T_2\in\X$ such that $T_1\leftarrow T\rightarrow T_2$, then there is $T'\in \X$ such that $T_1\rightarrow T'\leftarrow T_2$.

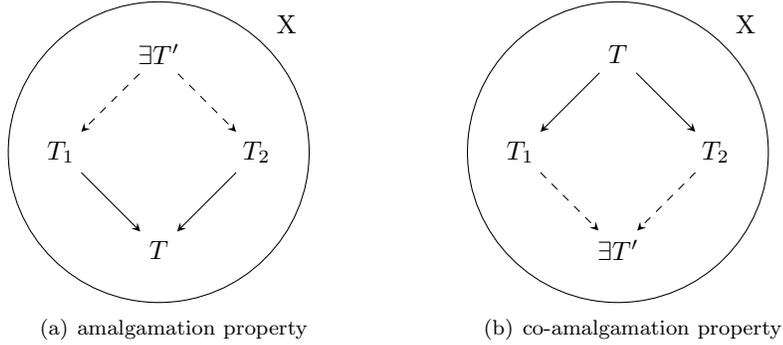
\begin{figure}[!ht]
  \centering
  \subfigure[amalgamation property]{
         \centering
    \begin{tikzpicture}[>=stealth]
      \node (T) at (0,-1.3) {$T$};
      \node (T1) at (1.3,0) {$T_2$};
      \node (T2) at (-1.3,0) {$T_1$};
      \node (T') at (0,1.3) {$\exists T'$};
      \draw[->] (T1) -- (T);
      \draw[->] (T2) -- (T);
      \draw[dashed,->] (T') -- (T1);
      \draw[dashed,->] (T') -- (T2);
      \draw (0,0) circle (2) ;
      \node[] at (1.7,1.7) {$\X$};
      \node at (2.4,0) {};
      \node at (-2,0) {};
    \end{tikzpicture}
}
\hspace{1cm}
\subfigure[co-amalgamation property]{
         \centering
    \begin{tikzpicture}[>=stealth]
      \node (T') at (0,-1.3) {$\exists T'$};
      \node (T1) at (1.3,0) {$T_2$};
      \node (T2) at (-1.3,0) {$T_1$};
      \node (T) at (0,1.3) {$T$};
      \draw[dashed,->] (T1) -- (T');
      \draw[dashed,->] (T2) -- (T');
      \draw[->] (T) -- (T1);
      \draw[->] (T) -- (T2);
      \draw (0,0) circle (2) ;
      \node[] at (1.7,1.7) {$\X$};
      \node at (2.4,0) {};
      \node at (-2,0) {};
    \end{tikzpicture}
}
\caption{Theory amalgamation properties}
\label{fig:amalgamation}
\end{figure}

\begin{thm}\label{thm:axdist} Let $\X$ be a class of theories having the theory amalgamation property or the theory co-amalgamation property. Then for all $T,T'\in \X$, we have the following
\begin{equation}\label{axdis}
    \Ad_{\X}(T,T')=
    \begin{cases}
      0 & \text{ if }  T\equiv T',\\
      1 & \text{ if $T'$ or $T$ is finitely axiomatizable over the other},\\
      \infty & \text{ if $T$ and $T'$ are not connected in $(\X,\equiv,-)$},\\ 
      2 & \text{ otherwise}.
    \end{cases}
\end{equation}
\end{thm}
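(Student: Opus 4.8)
The plan is to verify the four cases in order, reducing everything to a single $\le 2$ upper bound, which is where amalgamation (or co-amalgamation) does the real work. The first three cases are immediate: if $T\equiv T'$ then $\Ad_{\X}(T,T')=0$ by Theorem~\ref{thm:min-path}(a); if $T$ and $T'$ are not connected in $(\X,\equiv,-)$ then $\Ad_{\X}(T,T')=\infty$ by definition. The condition ``$T'$ or $T$ is finitely axiomatizable over the other'' is, after conjoining finitely many axioms into one, exactly the relation $T-T'$ (i.e.\ $T\leftarrow T'$ or $T'\leftarrow T$); this yields a path of length $1$, and since $T\not\equiv T'$ in this case we get $\Ad_{\X}(T,T')=1$, again using Theorem~\ref{thm:min-path}(a). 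For the ``otherwise'' case, the same two observations give the lower bound: $T\not\equiv T'$ rules out distance $0$, and the failure of $T-T'$ rules out distance $1$ (any path of length $1$ folds, via \eqref{omittingequiv1}--\eqref{omittingequiv2}, to a single $-$-step, hence to $T-T'$), so $\Ad_{\X}(T,T')\ge 2$. It therefore remains only to prove the upper bound $\Ad_{\X}(T,T')\le 2$ for every connected pair.

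I would isolate this as a claim: if $\X$ has the theory amalgamation property, then any two connected $T,T'\in\X$ satisfy $\Ad_{\X}(T,T')\le 2$. To prove it, start from an arbitrary connecting path and first delete its $\equiv$-steps: using \eqref{omittingequiv1} and \eqref{omittingequiv2}, every $\equiv$-step adjacent to a $-$-step can be absorbed into it, and consecutive $\equiv$-steps merge by transitivity, so without changing the length we may assume the path $T=T_0,\dots,T_m=T'$ consists solely of $-$-steps, each either an \emph{addition} ($T_{i-1}\leftarrow T_i$) or a \emph{removal} ($T_{i-1}\rightarrow T_i$). Record this as a word in the two letters $A$ (addition) and $R$ (removal). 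The key point is that an adjacent removal followed by an addition, $T_{i-1}\rightarrow T_i\leftarrow T_{i+1}$, is precisely the hypothesis of the amalgamation property (with $T_i$ in the role of $T$); amalgamation then supplies $S\in\X$ with $T_{i-1}\leftarrow S\rightarrow T_{i+1}$, and replacing $T_i$ by $S$ turns this $RA$ into $AR$ while keeping all endpoints, the length, and membership in $\X$ intact.

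The rest is a bubble-sort: I would induct on the number of inversions of the word (pairs $i<j$ with letter $R$ at $i$ and $A$ at $j$). Each swap above removes one inversion, so after finitely many steps the word is sorted as $A\cdots A\,R\cdots R$; the path then climbs by additions to a single peak $P$ and descends by removals to $T'$. Collapsing the consecutive additions via \eqref{lequiv} (and the consecutive removals via \eqref{lequiv} applied to the converse relation) yields $T\leftarrow P\rightarrow T'$, a path of length at most $2$, as required. The co-amalgamation case is dual: there an adjacent addition followed by a removal, $T_{i-1}\leftarrow T_i\rightarrow T_{i+1}$, matches the co-amalgamation hypothesis and is swapped into $RA$, sorting the word into $R\cdots R\,A\cdots A$ and collapsing to a valley $T\rightarrow P\leftarrow T'$ of length at most $2$. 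Combining the upper bound with the lower bounds above gives \eqref{axdis}.

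I expect the main obstacle to be the bookkeeping that makes the bubble-sort rigorous: correctly matching the amalgamation premise $T_1\rightarrow T\leftarrow T_2$ to an adjacent $RA$ pattern (and its dual), checking that a single swap strictly decreases the inversion count while preserving length and keeping every intermediate theory inside $\X$, and justifying the final collapse of like-typed steps through the transitivity facts \eqref{lequiv}. None of these is deep, but the argument works only because amalgamation lets us push all additions before all removals (or co-amalgamation all removals before all additions) without ever leaving $\X$.
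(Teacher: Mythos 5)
Your proof is correct, and its engine is the same as the paper's: the only use of amalgamation is to replace an adjacent removal--addition pattern $T_{i-1}\rightarrow T_i\leftarrow T_{i+1}$ by an addition--removal pattern through a new member of $\X$, with \eqref{lequiv}, \eqref{omittingequiv1} and \eqref{omittingequiv2} doing the absorbing and merging; that swap is exactly the content of the two cases in Figure~\ref{figcases}. Where you genuinely differ is the induction scheme. The paper proves as its key lemma that every path of length $3$ collapses to one of length $2$ (either two consecutive steps point the same way and \eqref{lequiv} applies, or one of the two valley patterns occurs and amalgamation applies), and then inducts on path length: the initial segment of a path of length $l+1$ collapses to length $2$ by the induction hypothesis, leaving a $3$-path, which collapses once more. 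You instead keep the length fixed, encode the path as a word in $A$ and $R$, and bubble-sort by inducting on the number of $RA$-inversions, each amalgamation swap turning $RA$ into $AR$ while preserving endpoints, length, and membership in $\X$; only after the path has become a single ascent to a peak $P$ followed by a single descent do you collapse both monotone runs by transitivity to get $T\leftarrow P\rightarrow T'$. The paper's route is more economical, invoking amalgamation only through the $3$-path lemma; yours cleanly separates the restructuring phase (where amalgamation acts) from the collapsing phase (where only \eqref{lequiv} acts), makes the invariants and termination explicit, and also spells out the lower bound $\Ad_{\X}(T,T')\ge 2$ in the ``otherwise'' case (a length-$1$ path folds to $T-T'$ via \eqref{omittingequiv1}--\eqref{omittingequiv2}), which the paper leaves as ``obvious''. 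Both arguments dualize to the co-amalgamation case in the same way, flipping peaks into valleys instead.
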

\begin{proof}
Let us first assume that $\X$ has the theory amalgamation property. Suppose that $T,T'\in \X$ are connected via a path of length $3$ in the cluster network $(\X,\equiv,-)$. By \eqref{omittingequiv1} and \eqref{omittingequiv2}, we can find $T_1,T_2\in \X$ such that $T-T_1-T_2-T'$. Note that $T,T_1,T_2,T'$ have the same language $\LL$ and the same set of formulas $\Fm$. We first show that $T$ and $T'$ are connected by a path of length $2$. If at least two consecutive $-$ in the path $T-T_1-T_2-T'$ are in the same direction, e.g., $T\leftarrow T_1\rightarrow T_2\rightarrow T'$, then we are done by \eqref{lequiv}. So, we may assume that we have one of the cases illustrated in Figure~\ref{figcases}:

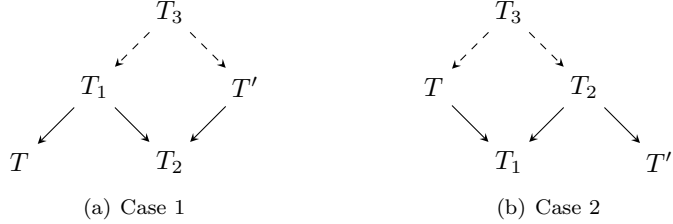
\begin{figure}[!ht]
\centering
\subfigure[Case 1]{
     \label{case 1}
    \begin{tikzpicture}[>=stealth]
\node (T) at (0,-0.5) {$T$};
\node (T1) at (1,0.5) {$T_1$};
\node (T2) at (2,-0.5) {$T_2$};
\node (T') at (3,0.5) {$T'$};
\node (T3) at (2,1.5) {$T_3$};
\draw[->] (T1) -- (T);
\draw[->] (T1) -- (T2);
\draw[->] (T') -- (T2);
\draw[->,dashed] (T3) -- (T1);
\draw[->,dashed] (T3) -- (T');
\end{tikzpicture}
}
\hspace{1.5cm}
\subfigure[Case 2]{
     \label{case 2}
    \begin{tikzpicture}[>=stealth]
\node (T) at (0,0.5) {$T$};
\node (T1) at (1,-0.5) {$T_1$};
\node (T3) at (1,1.5) {$T_3$};
\node (T') at (3,-0.5) {$T'$};
\node (T2) at (2,0.5) {$T_2$};
\draw[->] (T) -- (T1);
\draw[->,dashed] (T3) -- (T);
\draw[->,dashed] (T3) -- (T2);
\draw[->] (T2) -- (T1);
\draw[->] (T2) -- (T');
\end{tikzpicture}
}
\caption{$3$-paths can be replaced by $2$-paths}\label{figcases}
\end{figure}
\begin{enumerate}
\item[(a)] Suppose that we are in the first case $T\leftarrow T_1\rightarrow T_2\leftarrow T'$. Then, by the theory amalgamation property, there is $T_3\in\X$ such that $T_1\leftarrow T_3\rightarrow T'$. Hence, by \eqref{lequiv}, we have $T\leftarrow T_3\rightarrow T'$ which means $T$ and $T'$ are connected by a path of length $2$.
\item[(b)] Suppose that we are in the second case $T\rightarrow T_1\leftarrow T_2\rightarrow T'$. Then, by the theory amalgamation property, there is $T_3\in\X$ such that $T\leftarrow T_3\rightarrow T_2$. Hence, by \eqref{lequiv}, we have $T\leftarrow T_3\rightarrow T'$ which means $T$ and $T'$ are connected by a path of length $2$.
\end{enumerate}
Therefore, any path of length $3$ can be replaced by a path of length $2$. Now, we can prove the theorem. The first three cases of \eqref{axdis} are obvious, we need to show that otherwise the axiomatic distance is $2$. For this, it is enough to show that every path can be replaced by a path of length $2$. We use induction on the length of a path.
If the path is of length $l=3$, then we are done by the above
discussion. Suppose that we have already proven that every path not
longer than $l\ge 3$ can be replaced by a path of length $2$. Let path
$T-T_1-\cdots-T_{l}-T'$ be a path of length $l+1$. By induction
hypothesis, path $T-T_1-\cdots-T_{l}$ can be replaced by a path of
length $2$. Hence, path $T-T_1-\cdots-T_{l}-T'$ can be replaced by a
path of length $3$, which can be replaced by a path of length $2$ by the
induction hypothesis.

\begin{figure}[!ht]
\centering
\begin{tikzpicture}[scale=3,>=stealth]
\draw[->] (-0.1,2) -- node[above left]{$1$} (-0.9,1.2);
\draw[->] (0.1,2) --  node[above right]{$1$} (0.9,1.2) ;
\draw[-] (-0.8,1.2) -- (0,2);
\draw[-] (0.8,1.2) -- (0,2);
\draw[-] (-0.4,1.2) -- (0.2,1.8);
\draw[-] (0.4,1.2) -- (-0.2,1.8);
\draw[-] (-0,1.2) -- (0.4,1.6);
\draw[-] (0,1.2) -- (-0.4,1.6);
\draw[-] (0.4,1.2) -- (0.6,1.4);
\draw[-] (-0.4,1.2) -- (-0.6,1.4);
\draw[line width=2] (-0.8,1.2) -- (-0.6,1.4) -- (-0.4,1.2)--(-0.2,1.4)--(0,1.2)--
(0.2,1.4)--(0.4,1.2)--(0.6,1.4)--(0.8,1.2);
\node at (-0.9,1.1)[left]{$T$};
\node at (0.9,1.1)[right]{$T'$};
\end{tikzpicture}
\caption{Shortening paths using the theory amalgamation property.}\label{fig:pathshortening}
\end{figure}
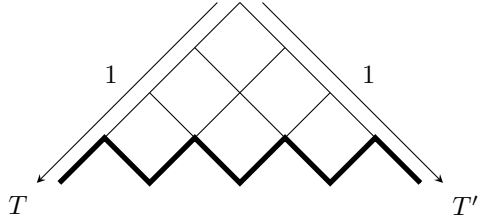

The above argument is illustrated in Figure~\ref{fig:pathshortening}. The proof of the case when $\X$ has the theory co-amalgamation property is completely analogous, but going downwards instead of upwards. 
\end{proof}

Even though Theorem~\ref{thm:axdist} shows the simplicity of axiomatic distance in plenty of cases, it leaves open the important case when $\X$ is the class of all consistent theories in $\Lg_{\alpha}^{\beta}$ (recall Problem~\ref{prob:axdist}).

\begin{prop}
Let $\X$ be the class of all consistent theories formulated over a fixed language $\LL$. Then, the class $\X$ does not have the theory amalgamation property. Moreover, e.g., if $\alpha=\omega$, $3\le\beta\le \omega$ and $\LL$ contains at least a binary relation symbol, then $\X$ does not have the theory co-amalgamation property either. 
\end{prop}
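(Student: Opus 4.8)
The plan is to defeat the two properties via two different obstructions: \emph{consistency} blocks amalgamation, while \emph{finite axiomatizability} blocks co-amalgamation. For the amalgamation property I would fix a contingent formula $\varphi\in\Fm$, i.e. one for which both $\{\varphi\}$ and $\{\lnot\varphi\}$ are consistent; such a $\varphi$ is available under the standing assumption (for instance $\exists v_0\exists v_1\,\lnot(v_0=v_1)$ when $\alpha\ge 2$, or an atomic statement about a relation/sentential symbol of $\LL$ otherwise). Taking $T\defeq\emptyset$, $T_1\defeq\{\varphi\}$ and $T_2\defeq\{\lnot\varphi\}$, all three are consistent, and $T_1\rightarrow T\leftarrow T_2$ holds since adding $\varphi$ (resp. $\lnot\varphi$) to $T$ yields $T_1$ (resp. $T_2$). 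Any amalgam $T'\in\X$ with $T_1\leftarrow T'\rightarrow T_2$ would strengthen both, so $T'\models\varphi$ and $T'\models\lnot\varphi$, forcing $T'$ to be inconsistent and contradicting $T'\in\X$. Hence $\X$ has no amalgam here; the only point needing care is the existence of a contingent $\varphi$, which is precisely the non-degeneracy the hypotheses provide.

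For co-amalgamation I would first reduce the search for a witness to a question of finite axiomatizability. Suppose $T_1\leftarrow T\rightarrow T_2$ and some $T'\in\X$ satisfies $T_1\equiv T'\cup\{\beta_1\}$ and $T_2\equiv T'\cup\{\beta_2\}$. Since $\beta_1\in\Cn(T_1)$ and $\Cn(T')\subseteq\Cn(T_1)\cap\Cn(T_2)\subseteq\Cn(T_1)$, a one-line sandwiching of consequence sets (using monotonicity of $\Cn$) shows that the canonical candidate $T''$ with $\Cn(T'')=\Cn(T_1)\cap\Cn(T_2)$ satisfies $T_1\equiv T''\cup\{\beta_1\}$ and $T_2\equiv T''\cup\{\beta_2\}$ as well. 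Thus a co-amalgam exists for the pair $(T_1,T_2)$ if and only if each of $T_1,T_2$ is finitely axiomatizable (by a single axiom) over $\Cn(T_1)\cap\Cn(T_2)$. So it suffices to exhibit a consistent $T$ lying one axiom above some $T_1$ that is not finitely axiomatizable over the relevant intersection.

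To build such a witness I would use pure equality, which the hypotheses make available and for which $\alpha=\omega$ supplies unboundedly many variables. Writing $\lambda_{\ge n}$ for ``there are at least $n$ elements'' and $\lambda_{=1}$ for ``there is exactly one element'', set $T_1\defeq\{\lambda_{\ge 2}\rightarrow\lambda_{\ge n}:n\ge 2\}$ (whose models are exactly the one-element models and the infinite models), $T\defeq\{\lambda_{=1}\}$, and $T_2\defeq\emptyset$. All are consistent, and $T_1\cup\{\lambda_{=1}\}\equiv T\equiv\emptyset\cup\{\lambda_{=1}\}$ yields $T_1\leftarrow T\rightarrow T_2$. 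Because $T_2=\emptyset$, the reduction above forces any co-amalgam to make $T_1$ equivalent to a single sentence.

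The heart of the proof — and the step I expect to be the main obstacle — is to show $T_1$ is \emph{not} finitely axiomatizable, i.e. not equivalent to any single sentence $\theta$. This is a compactness argument: were $T_1\equiv\{\theta\}$, then $\theta$ would hold in every infinite model, so $\{\lambda_{\ge n}:n\ge 2\}\models\theta$; by compactness $\lambda_{\ge N}\models\theta$ for some finite $N$, whence every model of size $\ge\max(N,2)$ would satisfy $\theta$ and hence $T_1$ — contradicting that $T_1$ has no finite model of size $\ge 2$. Therefore no co-amalgam exists and $\X$ fails the theory co-amalgamation property. I would add a remark that the binary relation symbol and the bound $3\le\beta$ are not actually needed for this witness; they merely leave room to replace the trivial $T_2=\emptyset$ by a non-trivial theory incomparable to $T_1$ (splitting off a clopen ``chunk'' of models, separated from the accumulation of finite models onto the infinite one) should a less degenerate example be preferred.
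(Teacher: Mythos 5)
Your proof is correct, and its second half takes a genuinely different route from the paper's. For the amalgamation half your skeleton is the same as the paper's (two jointly contradictory one-axiom extensions of $\emptyset$ cannot have a consistent common extension), but your witness is actually sounder: the paper takes $T_1=\{\exists v_0\,(v_0=v_0)\}$ and $T_2=\{\forall v_0\,(v_0\neq v_0)\}$, and since the paper's models have non-empty domains, this $T_2$ is inconsistent and hence not in $\X$ at all; your contingent-formula version $T_1=\{\varphi\}$, $T_2=\{\lnot\varphi\}$ repairs exactly this defect. (Both you and the paper tacitly need a contingent formula to exist; in degenerate cases such as $\alpha=1$ with $\LL=\emptyset$ there is none and amalgamation holds trivially, a caveat you at least flag.) For the co-amalgamation half the paper's witness is different from yours: it takes $T_1=\{\neg\Psi^{(2n)}:n\in\NN\}$ and $T_2=\{\neg\Psi^{(2n+1)}:n\in\NN\}$ (models of odd-or-infinite, respectively even-or-infinite, cardinality), joins them above by a single sentence asserting that there are infinitely many elements --- this is precisely where the binary relation symbol and $3\le\beta$ are needed --- notes that $\Cn(T_1)\cap\Cn(T_2)$ consists of tautologies because every model satisfies $T_1$ or $T_2$, and rules out finite axiomatizability of $T_1$, $T_2$ by an ultraproduct of finite models. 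You instead take $T_1$ = ``exactly one element or infinite'' and $T_2=\emptyset$, joined above by the pure-equality sentence $\lambda_{=1}$, which trivializes the common-consequence step, and you kill finite axiomatizability by compactness rather than ultraproducts. Both arguments are sound; yours is more elementary and strictly more general, since it uses only equality and hence shows that co-amalgamation fails over \emph{every} language once $\alpha=\omega$, rendering the binary-relation hypothesis and the bound on $\beta$ superfluous, exactly as your closing remark claims. Your preliminary reduction --- a co-amalgam exists if and only if each $T_i$ is one-axiom over $\Cn(T_1)\cap\Cn(T_2)$ --- is also a clean isolation of a fact the paper uses only implicitly in its final paragraph.
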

\begin{proof}
Let $\LL$ be a language and let $\X$ be the class of all consistent theories of language $\LL$. Let  $T=\emptyset$, $T_1=\{\exists v_0 \,(v_0=v_0)\}$ and $T_2=\{\forall v_0 \,(v_0\neq v_0)\}$ be three theories in $\X$. Then clearly, $T_1\rightarrow T\leftarrow T_2$, but there is no consistent theory $T'\in\X$ for which $T_1\leftarrow T'\rightarrow T_2$. Hence, $\X$ does not have the theory amalgamation property. 

Now assume that $\alpha=\omega$, $3\le \beta\le \omega$ and $\LL$ contains at least one binary relation symbol $R$. For every $n\in\NN$, let 
\begin{equation}\label{Psi-n}
\Psi^{(n)}\defeq \exists v_0\,\exists v_1\cdots \exists v_{n-1}\, \left( \left(\bigwedge_{0\leq i\neq j\leq n-1} v_i\neq v_j\right) \ \land \ \forall v_n\, \left(\bigvee_{0\leq i\leq n-1} v_n=v_i\right)\right).
\end{equation}
The formula $\Psi^{(n)}$ is saying that there are exactly $n$-many objects. Let
$$T_1=\{\neg \Psi^{(2n)}: n\in \NN\} \ \text{ and } \ T_2=\{\neg \Psi^{(2n+1)}: n\in \NN\}$$ be two theories in $\X$. Then a model of $T_1$ is a model for $\LL$ which have odd finite or infinite cardinality. Similarly, a model of $T_2$ is a model for $\LL$ which have even finite or infinite cardinality. Let $\varphi$ be a formula in language $\LL$ requiring that there are infinitely many objects. Using the relation symbol $R$ it is easy to write up such a formula by requiring that $R$ is irreflexive, transitive and serial. Then, clearly, $T_1\cup\{\varphi\}\equiv T_2\cup\{\varphi\}\equiv \{\varphi\}$ and $T_1\leftarrow \{\varphi\}\rightarrow T_2$. 

However, there is no theory $T'$ for which  $T_1\rightarrow T'\leftarrow T_2$ because (1) all the common consequences of $T_1$ and $T_2$ are tautologies, i.e., $\Cn(T_1)\cap \Cn(T_2)\equiv \emptyset$ since every model $\mathfrak{M}$ for $\LL$ is either a model for $T_1$ or a model for $T_2$, and (2) neither $T_1$ nor $T_2$ is finitely axiomatizable. The non-finite axiomatizability of $T_1$ can be confirmed as follows. For each $i\in\NN$, let $\mathfrak{M}_i$ be a model for $\LL$ whose cardinality is $2i$. Clearly, $\mathfrak{M}_i$ cannot be a model for $T_1$. However, the ultraproduct of the models $\mathfrak{M}_i$'s (over a free ultrafilter) is a model for $T_1$ because it is of infinite cardinality (for some details about ultraproducts, we refer the reader to \cite[Chapter 4.5]{modeltheory}). Thus, by \cite[Theorem 4.5.27 (ii)]{modeltheory}, it follows that $T_1$ is not finitely axiomatizable. Same applies to $T_2$ by taking ultraproduct of finite models having cardinalities $2i+1$ for all $i\in\NN$. 

Therefore, the theories $T_1$ and $T_2$ are not finitely axiomatizable over their common consequences. Hence, there is no theory $T'$ in $\X$ for which $T_1\rightarrow T'\leftarrow T_2$. Consequently, the class $\X$ does not have the theory co-amalgamation property.  
\end{proof}

It is worth noting that even if the range of the axiomatic distance consists only of four elements (cf., Theorem~\ref{thm:axdist} and Problem~\ref{prob:axdist}), this does not mean that this distance is not interesting. However, it might be true that the distance defined in this way does not tell us much information on the nature of the axioms separating two theories, adding any axiom is considered as one step. One can overcome this problem by giving weights to the axiom adding steps, e.g., considering the addition of certain kind of axioms as two or more steps. 

\section{Conceptual distance}\label{cd}
In 1935, A. Tarski introduced the so-called Lindenbaum-Tarski algebras as a device to establish correspondence between logic and algebras. The \emph{\textbf{Lindenbaum-Tarski algebra} of a theory $T$} is the quotient algebra obtained by factoring the algebra of formulas by the congruence relation of logical equivalence between formulas. These algebras are also called concept algebras, e.g., in \cite{andnem17}. This reflects the fact that the underlying set of a Lindenbaum-Tarski algebra consists of the different concepts that can be defined within the corresponding theory.

\begin{definition}
A \emph{\textbf{concept} in theory $T$} is a maximal set of logically equivalent formulas in $T$. In other words, a concept in $T$ is the set $[\varphi]_T\defeq \{\psi\in\Fm:T\models\varphi\leftrightarrow\psi\}$, for some formula $\varphi$.
\end{definition}

Intuitively, by a concept we mean a definition, no matter how many different ways one can write it equivalently. Now, we define conceptual size of a theory $T$ to be the size of the underlying set of the corresponding Lindenbaum-Tarski algebra. It might be more convenient in some cases to use cardinality here instead of size.

\begin{definition}\label{def:con-size}
Let $T$ be a theory and let $k\in\NN\cup\{\infty\}$. We say that the \emph{\textbf{conceptual size} of $T$ is $k$} and we write $\Cz(T)=k$ iff the set $\{[\varphi]_T:\varphi\in\Fm\}$ is of size $k$.%
\end{definition}

This is equivalent to saying that a theory $T$ is of conceptual size $k$ iff there is a maximal set $X\subseteq\Fm$ of size $k$ such that $T\not\models\varphi\leftrightarrow\psi$ for each $\varphi,\psi\in X$. It is also worthy of note that, when $\alpha\geq\omega$, a theory that has a model of at least two different elements cannot have a finite conceptual size. 

\begin{prop}\label{prop:czeq} Let $T,T'$ be two theories. If there is a faithful interpretation from $T$ to $T'$, then we have $\Cz(T)\le \Cz(T')$. 
Consequently,  
$$T\intertrans T'\implies \Cz(T)= \Cz(T').$$
\end{prop}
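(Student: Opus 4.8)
The plan is to show that a faithful interpretation $\tr$ from $T$ to $T'$ induces an injection from the concepts of $T$ into the concepts of $T'$, namely the map sending $[\varphi]_T$ to $[\tr(\varphi)]_{T'}$. Since $\Fm$ is a set, the collections of concepts are sets, and an injection between sets forces the size of the domain to be at most the size of the codomain (if the domain is infinite, so is the codomain; if it has $k$ elements, the codomain has at least $k$). Hence exhibiting such an injection immediately yields $\Cz(T)\le\Cz(T')$. The observation that makes this map well behaved is that, because $\tr$ commutes with negation and conjunction, it also commutes with the defined biconditional, i.e. $\tr(\varphi\leftrightarrow\psi)=\tr(\varphi)\leftrightarrow\tr(\psi)$ for all $\varphi,\psi\in\Fm$.

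Granting this commutation, I would verify well-definedness and injectivity of the map together, as they are mirror images of one another. For well-definedness, assume $[\varphi]_T=[\psi]_T$, i.e. $T\models\varphi\leftrightarrow\psi$; the interpretation property of $\tr$ gives $T'\models\tr(\varphi\leftrightarrow\psi)$, which by the commutation remark is $T'\models\tr(\varphi)\leftrightarrow\tr(\psi)$, so $[\tr(\varphi)]_{T'}=[\tr(\psi)]_{T'}$. For injectivity, assume conversely $[\tr(\varphi)]_{T'}=[\tr(\psi)]_{T'}$, that is $T'\models\tr(\varphi)\leftrightarrow\tr(\psi)=\tr(\varphi\leftrightarrow\psi)$, and invoke the \emph{faithfulness} of $\tr$ to conclude $T\models\varphi\leftrightarrow\psi$, hence $[\varphi]_T=[\psi]_T$. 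This is the heart of the argument and pinpoints the main obstacle: well-definedness needs only the forward direction (plain interpretation), whereas injectivity genuinely uses the backward direction (faithfulness), so the hypothesis cannot be weakened to a mere interpretation.

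For the consequence, suppose $T\intertrans T'$, witnessed by a definitional equivalence $\tr_{12}$. By Proposition~\ref{prop:faithful}, $\tr_{12}$ is a faithful interpretation from $T$ to $T'$, so the first part yields $\Cz(T)\le\Cz(T')$. By the symmetry built into the definition of definitional equivalence, the accompanying $\tr_{21}$ is itself a definitional equivalence and hence, again by Proposition~\ref{prop:faithful}, a faithful interpretation from $T'$ to $T$, so the first part applied in the other direction yields $\Cz(T')\le\Cz(T)$. Combining the two inequalities gives $\Cz(T)=\Cz(T')$.
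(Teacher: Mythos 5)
Your proposal is correct and is essentially the paper's own argument: the paper picks a maximal set of pairwise $T$-inequivalent formulas and uses faithfulness together with the fact that $\tr$ commutes with $\leftrightarrow$ to show their $\tr$-images stay pairwise $T'$-inequivalent, which is exactly your injectivity of $[\varphi]_T\mapsto[\tr(\varphi)]_{T'}$ phrased at the level of representatives rather than equivalence classes. The consequence is handled identically in both, by applying Proposition~\ref{prop:faithful} to get faithful interpretations in both directions and combining the two inequalities.
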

\begin{proof}
Let $T$ and $T'$ be two theories, and suppose that $\tr$ is a faithful interpretation of $T$ into $T'$. Now, there is a maximal set  $X\subseteq \Fm$ of size $\Cz(T)$ such that  $T\not\models\varphi\leftrightarrow\psi$ for any $\varphi,\psi\in X$. Let $$X'=\{\tr(\varphi):\varphi \in X\}\subseteq\Fm'.$$ Since $\tr$ is a faithful interpretation, $T'\not\models \tr(\varphi\leftrightarrow\psi)$. Hence $T'\not\models \tr(\varphi)\leftrightarrow \tr(\psi)$ since $\tr$ is an interpretation. Consequently, $\Cz(T)\le \Cz(T')$. If $T\intertrans T'$, then by Proposition~\ref{prop:faithful} there are faithful interpretations between $T$ and $T'$ in both directions. Hence $\Cz(T)\le \Cz(T')\le \Cz(T)$. Therefore, $\Cz(T)= \Cz(T')$ as desired.
\end{proof}

As a matter of fact, two theories are definitionally equivalent iff their Lindenbaum-Tarski algebras are isomorphic \cite[Theorem 4.3.43]{HMT85}. So, the above Proposition is a consequence of the straightforward observation: Two algebras of different sizes cannot be isomorphic. 

Now, we want to define a distance counting the minimum number of concepts that distinguish two theories $T$ and $T'$. It is very natural to explain the idea within the framework of Lindenbaum-Tarski algebras, since these are concept algebras and we want to count cocnepts. For simplicity, let us assume that the Lindenbaum-Tarski algebra $\mathfrak{A}$ of $T$ is embeddable into the Lindenbuam-Tarski algebra $\mathfrak{A}'$ of $T'$. Thus, the minimum number of concepts distinguish the two theories is equal to the minimum number of elements of $\mathfrak{A}'$ that we can add to $\mathfrak{A}$ (more precisely, to one of its copies inside $\mathfrak{A}'$) to generate the algebra $\mathfrak{A}'$, see Figure~\ref{algebra}.

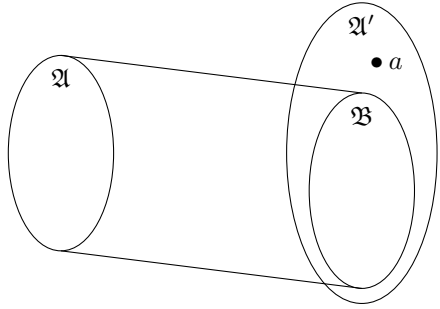
\begin{figure}[!htb]
\begin{tikzpicture}[scale=1]
    \draw (0,0) ellipse (0.7cm and 1.3cm);
    \draw (4,-0.5) ellipse (0.7cm and 1.3cm);
    \draw (4,0) ellipse (1cm and 2cm);
    \draw (4,0.8)--(0,1.3);
    \draw (4,-1.8) -- (0,-1.3);
    \node (A) at (0,1) {$\mathfrak{A}$};
    \node (B) at (4,0.5) {$\mathfrak{B}$};
    \node (A') at (4,1.7) {$\mathfrak{A}'$};
    \node (E) at (4.2,1.2) {$\bullet$};
    \node (a) at (4.45,1.2) {$a$};
\end{tikzpicture}
\caption{The distance between $\mathfrak{A}$ and $\mathfrak{A}'$ is one if $\langle B,a\rangle \cong \mathfrak{A}'$.}\label{algebra}
\end{figure}

The following definitions illustrate the above idea, but in terms of logic instead of algebras. As we mentioned before, the correspondence between the following definitions and the above algebraic idea is planned to be discussed in details in a forthcoming algebra oriented paper.

\begin{definition}\label{def:conpext}
We say that \emph{theory $T'$ is a \textbf{one-concept-extension} of theory $T$} and we write $T\leadsto T'$ iff $\LL'=\LL\cup\{R\}$, for some relation symbol $R$, and $T\sqsubseteq T'$. We also write $T\sim T'$ iff $T\leadsto T'$ or $T'\leadsto T$, and in this case we say that \emph{$T$ and $T'$ are \textbf{separated by at most one concept}}.
\end{definition}

Again, we understand concept removal to be the converse of concept adding. Later, in Section~\ref{Other distances}, we are going to introduce another notion for concept removal, and a corresponding distance notion.

\begin{definition}\label{conceptual distance}Let $\X$ be a class of theories. The step distance induced by the cluster network $(\X,\intertrans,\sim)$ is called \emph{\textbf{conceptual distance} on $\X$} and is denoted by
  $\Cd_{\cl{X}}$. In the case when $\X$ is the class of all theories in $\Lg_{\alpha}^{\beta}$, we denote the conceptual distance on $\X$ by $\Cd_{\alpha}^{\beta}$.
\end{definition}

By Remark~\ref{rem}, it is clear that $\Cd_{\alpha}^{\beta}(T,T')\geq \Cd_{\alpha}^{\gamma}(T,T')$ for any ordinal $\beta\leq\gamma\leq\alpha+1$ and any theories $T$ and $T'$ in $\Lg_{\alpha}^{\beta}$. It is also apparent that an inconsistent theory is of an infinite conceptual distance from any consistent theory, because relations $\intertrans$ and $\leadsto$ cannot make a consistent theory inconsistent and also cannot make an inconsistent theory consistent. Now, we give more examples.

\begin{lem}\label{lem:no-of-models}
Let $k,m$ be two finite numbers. Let $T$ and $T'$ be two theories such that $T\sqsubseteq T'$. Suppose that $\LL'=\LL\cup\{R\}$, for some relation $R$ whose rank $\rank(R)=m$. Then, $$I(T',k)\leq {2^{k^m}} I(T,k).$$
\end{lem}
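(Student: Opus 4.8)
The plan is to count models of $T'$ of cardinality $k$ (up to isomorphism) by grouping them according to their reducts to the smaller language $\LL$. Since $\LL' = \LL \cup \{R\}$ and $T \sqsubseteq T'$, every model $\mathfrak{M}'$ of $T'$ of cardinality $k$ has a reduct $\mathfrak{M} = \mathfrak{M}'\!\restriction\!\LL$ to the language $\LL$, and because $T'$ is a conservative extension of $T$, this reduct is a model of $T$ of cardinality $k$. Conversely, any model $\mathfrak{M}'$ of $T'$ is obtained from its reduct $\mathfrak{M}$ by choosing an interpretation $R^{\mathfrak{M}'} \subseteq M^m$ for the single new relation symbol $R$ of rank $m$.

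\textbf{Bounding the expansions.} First I would fix a representative $\mathfrak{M}$ from each of the $I(T,k)$ isomorphism classes of $k$-element models of $T$. For a fixed underlying set $M$ of size $k$, the number of possible interpretations of $R$ is the number of subsets of $M^m$, namely $2^{k^m}$ since $|M^m| = k^m$. Thus, on a fixed reduct $\mathfrak{M}$, there are at most $2^{k^m}$ distinct expansions to a model of $T'$ (some choices of $R^{\mathfrak{M}'}$ may fail to satisfy $T'$, which only helps the inequality). The key point is that every $k$-element model of $T'$ is isomorphic to an expansion of one of these fixed representatives: if $\mathfrak{N}'$ is any $k$-element model of $T'$, its reduct $\mathfrak{N}$ is isomorphic to some chosen representative $\mathfrak{M}$ via a bijection $f$, and transporting $R^{\mathfrak{N}'}$ across $f$ exhibits $\mathfrak{N}'$ as isomorphic to an expansion of $\mathfrak{M}$.

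\textbf{Assembling the bound.} Putting these together, every isomorphism class of $k$-element models of $T'$ contains a model whose reduct is one of the $I(T,k)$ fixed representatives, and over each such representative there are at most $2^{k^m}$ expansions (hence at most $2^{k^m}$ isomorphism classes arising from it). Therefore
\begin{equation*}
I(T',k) \leq 2^{k^m} \cdot I(T,k),
\end{equation*}
which is the claimed inequality.

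\textbf{Main obstacle.} The routine part is the arithmetic $|M^m| = k^m$ and the count $2^{k^m}$ of subsets. The point requiring genuine care is the reduction to isomorphism classes: I must argue that passing to reducts is well-defined on isomorphism classes (isomorphic models of $T'$ have isomorphic reducts, which follows directly from the definition of isomorphism since it respects all of $\LL \subseteq \LL'$), and that counting expansions over a \emph{fixed} set of representatives genuinely overcounts the isomorphism classes of $T'$-models rather than undercounting them. This is exactly where the conservativity hypothesis $T \sqsubseteq T'$ is used to guarantee that reducts land in $\Mod(T)$, so that $I(T,k)$ is the correct number of reduct-classes to multiply against.
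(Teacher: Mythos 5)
Your proof is correct and takes essentially the same approach as the paper: the paper's one-line proof simply invokes the fact that a model of size $k$ admits at most $2^{k^m}$ ways of defining a rank-$m$ relation, which is exactly the expansion count you carry out. Your additional care about reducts landing in $\Mod(T)$ (via conservativity) and about transporting expansions across isomorphisms of representatives just makes explicit what the paper leaves implicit.
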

\begin{proof}
This follows immediately from the following fact: If $\mathfrak{M}$ is a model of size $k$, then there are at most ${2^{k^m}}$ different ways of defining a relation of rank $m$ in $\mathfrak{M}$.
\end{proof}

\begin{thm}\label{thm:non-discrete} Suppose that $\beta\geq 1$. For every $n\in\NN\cup\{\infty\}$, there are theories $T$ and $T'$ in $\Lg_{\alpha}^{\beta}$ such that $\Cd_{\alpha}^{\beta}(T,T')=n$.
\end{thm}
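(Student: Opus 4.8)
The plan is to control a single numerical invariant, the number $I(T,1)$ of one-element models of $T$ up to isomorphism, and to show that it can change only slowly along a path in $(\X,\intertrans,\sim)$. The decisive feature of cardinality $1$ is that on a one-element universe there is exactly one tuple of each length, so a relation symbol of any rank $m$ (finite or not) has only two possible interpretations; thus the factor $2^{k^m}$ of Lemma~\ref{lem:no-of-models} collapses to $2$ at $k=1$ independently of the rank of the adjoined symbol. Concretely, if $T\sqsubseteq T'$ with $\LL'=\LL\cup\{R\}$, then the reduct map sends one-element models of $T'$ to one-element models of $T$; it is at most two-to-one, giving $I(T',1)\le 2\,I(T,1)$, and it is onto because conservativity forces every one-element model of $T$ to expand to one of $T'$ (a one-element model is pinned down by a single sentence, so a failure to expand would yield an $\LL$-consequence of $T'$ that is not a consequence of $T$). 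Hence $I(T,1)\le I(T',1)\le 2\,I(T,1)$, so each $\sim$-step alters $I(\cdot,1)$ by a factor in $[\tfrac12,2]$. Since a definitional equivalence induces a cardinality-preserving bijection between the models of the two theories, $\intertrans$ leaves $I(\cdot,1)$ unchanged. Combining these, every path from $T$ to $T'$ of length $\ell$ satisfies $2^{-\ell}\le I(T',1)/I(T,1)\le 2^{\ell}$, so that
\[
\Cd_{\alpha}^{\beta}(T,T')\ \ge\ \bigl|\log_2 I(T',1)-\log_2 I(T,1)\bigr|
\]
whenever both numbers are finite and nonzero.

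For a finite target value $n$, I would let $T$ be the empty theory over the empty language and $T'$ the empty theory over the language $\{P_1,\dots,P_n\}$ of $n$ sentential constants; these symbols are available because $\beta\ge 1$ permits rank $0$. A one-element model is determined up to isomorphism by the set of sentential constants it makes true, so $I(T,1)=1$ and $I(T',1)=2^{n}$, and the displayed inequality gives $\Cd_{\alpha}^{\beta}(T,T')\ge n$. For the reverse inequality I exhibit the path $T=T_0\leadsto T_1\leadsto\dots\leadsto T_n=T'$, where $T_i$ is the empty theory over $\{P_1,\dots,P_i\}$: each inclusion $T_i\sqsubseteq T_{i+1}$ is a conservative extension (adjoining an unconstrained relation symbol to an empty theory changes neither the old formulas nor their validity), so each step is a one-concept-extension in the sense of Definition~\ref{def:conpext}. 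This path has length $n$, whence $\Cd_{\alpha}^{\beta}(T,T')\le n$, and therefore equals $n$.

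For $n=\infty$ I would take $T=\emptyset$, which is consistent, and $T'=\{P\wedge\lnot P\}$ for a sentential constant $P$, which is inconsistent. As already observed, neither $\intertrans$ nor $\leadsto$ can turn a consistent theory into an inconsistent one or conversely, so $T$ and $T'$ lie in different connected components of $(\X,\intertrans,\sim)$ and $\Cd_{\alpha}^{\beta}(T,T')=\infty$.

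The routine parts are the two constructions and the upper bounds. The substantive point, and the one I expect to require the most care, is the lower bound, and within it the two invariance facts about $I(\cdot,1)$: that adjoining a single relation symbol of arbitrary rank multiplies the count of one-element models by at most $2$ (this is exactly why I pass to cardinality $1$, where the rank becomes irrelevant), and that conservativity makes the associated reduct map surjective, so that the count cannot drop across a concept-removal either. The surjectivity uses that a finite model, in particular a one-element one, is captured up to isomorphism by a single sentence; in the degenerate cases with very few variables (for example $\Lg_0^1$) the same conclusion is obtained by the evident propositional reasoning, since there the models of a theory are just its satisfying truth assignments.
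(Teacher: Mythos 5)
Your proposal follows the paper's own strategy: the same family of witnesses (empty theories over a nested chain of languages, one fresh symbol at a time) and the same lower-bound mechanism (tracking the spectrum $I(\cdot,k)$ along a path). Choosing rank-$0$ symbols and counting one-element models is exactly what the paper does in its infinite-$\beta$ case, and it also streamlines its finite-$\beta$ case, where the paper instead uses symbols of rank $\beta-1$ and counts $k$-element models; a single uniform argument for all $\beta\ge 1$ is a genuine, if modest, improvement. Your $n=\infty$ case (consistent vs.\ inconsistent theory) differs from the paper's, which runs $n=\infty$ through the same counting; it works, but only if $T$ is placed on a nonempty language, since for $\alpha=0$ over the empty language $\Fm=\emptyset$, conservativity is vacuous, and an inconsistent one-concept extension of the empty theory then exists. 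More importantly, you correctly identify two facts the counting argument needs and the paper never states: that $\intertrans$-steps preserve $I(\cdot,1)$, and that concept-removal steps cannot increase it. Lemma~\ref{lem:no-of-models} bounds only the extension direction, and paths may zigzag, so neither fact is optional.

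Your justification of the removal-step bound is, however, where the write-up has a real gap. ``A one-element model is pinned down by a single sentence'' requires the base language to be finite (a single formula mentions only finitely many relation symbols, and an $\intertrans$-step can move the path to a definitionally equivalent theory over an infinite language) and requires $\alpha\ge 2$ to express ``there is exactly one element''. Over infinite languages in $\Lg_\omega^\omega$ the claim can be rescued by compactness, but in few-variable fragments that the theorem also covers (only $\beta\geq 1$ is assumed) it is simply false: in $\Lg_1^2$, let $B$ be the empty theory over unary symbols $P_1,\dots,P_m$ and let $A=\{P_i(v_0)\rightarrow(\exists v_0\, R(v_0)\land\exists v_0\,\lnot R(v_0)): 1\le i\le m\}$. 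Then $B\leadsto A$ — conservativity holds because no $\Lg_1^2$-formula over $\{P_1,\dots,P_m\}$ can distinguish a model from one with the same inhabited $P$-regions and at least two elements — yet $I(A,1)=2$ while $I(B,1)=2^m$, so one removal step can inflate the count arbitrarily. To be fair, the paper's proof is silent on removal steps altogether and needs exactly the same repair, so yours is the more careful rendition of a shared argument; but as stated, your surjectivity step should either be restricted to settings where it is provable (e.g., $\alpha=\beta=\omega$, via compactness) or be flagged as the unproven seam.
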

\begin{proof}
Let $\mathcal{L}_{\infty}$ be a language for $\Lg_{\alpha}^{\beta}$ that consists of infinitely many relation symbols (describing infinitely many different concepts), each of which is of a fixed rank $\gamma<\beta$.
\begin{equation}\label{list}
R_1,R_2,\ldots
\end{equation}
For each $n\in\NN$, we let $\mathcal{L}_{n}\subseteq\mathcal{L}_{\infty}$ be the language consists of the first $n$-many relation symbols from the list in \eqref{list}, and we let $T_n^{\star}=\emptyset$ be the empty theory on language $\mathcal{L}_n$. Let $n\in\NN\cup\{\infty\}$. It is clear that $\Cd_{\alpha}^{\beta}(T_0^{\star},T_n^{\star})\leq n$. We need to prove the other direction. We consider the following cases:

Suppose that $\beta$ is finite. In this case, we assume that $\gamma=\beta-1$. Let $k\geq 1$ be a finite number. By Lemma~\ref{lem:no-of-models}, it
follows that, for each pair of theories $T_1,T_2$ in logic $\Lg_{\alpha}^{\beta}$,
\begin{equation}\label{beta-1}
T_1\leadsto T_2\implies I(T_2,k)\leq 2^{k^{\beta-1}} I(T_1,k).
\end{equation}
Moreover, it is easy to see that
\begin{equation}\label{modelsstar}
I(T_n^{\star},k)={2^{nk^{\beta-1}}} \ \ \text{(we define this
entity to be $\infty$ if $n=\infty$)},
\end{equation}
because there are exactly $n$-many relation symbols, each of which is of rank $\beta-1$, in $\LL_n$. Thus, at least
$n$-many steps are needed to increase  $I(T_0^{\star},k)=2^0=1$ to
$I(T_n^{\star},k)={2^{nk^{\beta-1}}}$. Therefore, we have $\Cd_{\alpha}^{\beta}(T_0^{\star},T_n^{\star})=n$ as
required.

\begin{figure}[htb]
    \centering
    \begin{tikzpicture}[scale=1]

\def\n{6}

\draw[fill, gray!21] (0,0) to (\n,0) to (\n,\n);

\draw[dotted,thick] (0,0) to (1,1) to (2,1.5) to (3,2.25) to (4,2.75) to (5,3.5) to (6,4);

\draw[gray] (0,0) node [black,left] {$2^{0} \ \ $};
\draw[gray] (0,1) node [black,left] {$2^{k^{\beta-1}}$} to (\n,1);
\draw[gray] (1,0) to (1,\n);
\draw[fill] (1,1) node[below right]{\scriptsize $T_{1}^{\star}$} circle [radius=.05];
\draw[fill] (0,0) node[below right]{\scriptsize $T_{0}^{\star}$} circle [radius=.05];

\foreach \x in {2,...,\n}
{
\draw[gray] (0,\x) node [black,left] {$2^{\x k^{\beta-1}}$}to (\n,\x);
\draw[gray] (\x,0) to (\x,\n);
\draw[fill] (\x,\x) node[below right]{\scriptsize $T_{\x}^{\star}$} circle [radius=.05];
}

\draw (0,1+\n) node[left]{$I(T,k)$} -- (0,0) -- (\n+0.5,0) node[below]{theories};

\end{tikzpicture}
\caption{Theorem~\ref{thm:non-discrete}: $\Cd_{\alpha}^{\beta}(T_0^{\star},T_n^{\star})=n$}
\end{figure}

Suppose that $\beta$ is infinite. We do not need any extra assumption on $\gamma$. All what we need here is to count models of size $1$. By \eqref{modelsstar}, we have $I(T_n^{\star},1)=2^n$. Moreover, for any two theories $T_1$ and $T_2$, 
\begin{equation}
T_1\leadsto T_2\implies I(T_2,1)\leq 2 I(T_1,1).
\end{equation}
This is true because in a model of size $1$ there are at most two relations (of any fixed rank). Again, for the same reasons, we need at least $n$-many steps to increase  $I(T_0^{\star},1)=2^0=1$ to $I(T_n^{\star},1)=2^n$. Therefore, $\Cd_{\alpha}^{\beta}(T_0^{\star},T_n^{\star})=n$ as desired.
\end{proof}

Our constructions in the above proof for the case when $\beta$ is infinite (and hence $\alpha$ is also infinite) all have models of size $1$. In Theorem~\ref{NSEM} below, we investigate what happens if the theories in question do not have models of size $1$. First, we need the following lemma.

\begin{lem}\label{lem}Suppose that $\alpha=\beta=\omega$. Let $T_1$, $T_2$ and $T_3$ be theories such that
$$I(T_1,1)=I(T_2,1)=I(T_3,1)=0.$$ Then, if $T_1\leadsto T_2\leadsto T_3$, then there is a theory $T$ such that $T_1\leadsto T \intertrans T_3$.
\end{lem}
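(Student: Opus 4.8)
The plan is to realise the two successive one-concept-extensions $T_1\leadsto T_2\leadsto T_3$ by a single fresh relation symbol, coding the two new symbols into one relation of larger rank. Write $\LL_2=\LL_1\cup\{R\}$, $\LL_3=\LL_2\cup\{S\}$ with $\rank(R)=m$, $\rank(S)=n$, and set $k=\max(m,n)$. Let $Q$ be a new symbol of rank $k+2$: its first two arguments serve as a \emph{tag} (equal versus unequal decides whether we are coding $R$ or $S$) and the remaining $k$ arguments carry the data. The coding can be inverted precisely because of the hypothesis $I(T_3,1)=0$: every model has at least two elements, so both tag-patterns are realizable and two bits of information ($R$ and $S$) fit into the single relation $Q$. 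In a one-element model this would fail, which is exactly why the hypothesis is needed.

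Concretely, over $\LL_3\cup\{Q\}$ let
$$\delta_Q(v_0,\dots,v_{k+1})\ \defeq\ \big(v_0=v_1\wedge R(v_2,\dots,v_{m+1})\big)\ \vee\ \big(v_0\ne v_1\wedge S(v_2,\dots,v_{n+1})\big),$$
and put $T_3^{+}\defeq T_3\cup\{\forall v_0\cdots\forall v_{k+1}\,(Q(v_0,\dots,v_{k+1})\leftrightarrow\delta_Q)\}$. As $Q$ is introduced by an explicit definition, $T_3\intertrans T_3^{+}$. The key point is that $R$ and $S$ are in turn explicitly definable from $Q$ over the language $\LL_T\defeq\LL_1\cup\{Q\}$. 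Indeed, with $\delta_R(v_0,\dots,v_{m-1})\defeq Q(v_0,v_0,v_0,v_1,\dots,v_{m-1},v_0,\dots,v_0)$ (equal tag, data padded by $v_0$) the equal tag selects the first disjunct of $\delta_Q$, so $T_3^{+}\models R(\bar v)\leftrightarrow\delta_R(\bar v)$. For $S$ take $\delta_S(v_0,\dots,v_{n-1})\defeq\exists v_k\exists v_{k+1}\big(v_k\ne v_{k+1}\wedge Q(v_k,v_{k+1},v_0,\dots,v_{n-1},v_0,\dots,v_0)\big)$; an unequal tag selects the second disjunct, so $\delta_S$ is equivalent over $T_3^{+}$ to $S(\bar v)\wedge\exists v_k\exists v_{k+1}(v_k\ne v_{k+1})$, and the existential holds in every model since $I(T_3,1)=0$. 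Hence $T_3^{+}\models S(\bar v)\leftrightarrow\delta_S(\bar v)$.

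Now define the candidate as the reduct $T\defeq\Cn(T_3^{+})\cap\Fm_T$. Because $R,S$ are explicitly definable over $\LL_T$ inside $T_3^{+}$ via $\delta_R,\delta_S$, the standard definitional-extension fact applies again and gives $T\intertrans T_3^{+}$; the one thing to verify is that adjoining $R,S$ back to $T$ through their definitions recovers all of $T_3^{+}$, which follows because for any $\psi\in T_3^{+}$ the $\LL_T$-formula $\psi^\ast$ obtained by replacing $R,S$ with $\delta_R,\delta_S$ satisfies $T_3^{+}\models\psi\leftrightarrow\psi^\ast$, so $\psi^\ast\in T$. Transitivity of $\intertrans$ then yields $T\intertrans T_3^{+}\intertrans T_3$. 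For the remaining condition $T_1\leadsto T$, note $\LL_T=\LL_1\cup\{Q\}$ differs from $\LL_1$ by one symbol, so it suffices to show $T_1\sqsubseteq T$. Conservativity is transitive, giving $T_1\sqsubseteq T_3\sqsubseteq T_3^{+}$, while $T\sqsubseteq T_3^{+}$ follows from $T=\Cn(T_3^{+})\cap\Fm_T$ and the characterization $T_1\sqsubseteq T_2\iff T_1\equiv\Cn(T_2)\cap\Fm_1$ noted after Definition~\ref{def-con-ext}. Since $\Fm_1\subseteq\Fm_T$, for every $\varphi\in\Fm_1$ we get $T\models\varphi\iff T_3^{+}\models\varphi\iff T_1\models\varphi$, i.e. $T_1\sqsubseteq T$. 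Thus $T_1\leadsto T\intertrans T_3$.

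The main obstacle is the inversion of the coding, concentrated in the verification $T_3^{+}\models S\leftrightarrow\delta_S$: recovering $S$ from $Q$ requires witnessing a distinct pair $v_k\ne v_{k+1}$, and this is exactly the step where $I(T_3,1)=0$ is indispensable. Everything else — the definability of $R$, the two applications of the definitional-extension fact, and the transitivity bookkeeping for $\sqsubseteq$ and $\intertrans$ — is routine.
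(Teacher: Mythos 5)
Your proposal is correct and is essentially the paper's own proof: both arguments code $R$ and $S$ into a single relation of higher rank via an equal/unequal pair of tag arguments, and both use the hypothesis $I(T_3,1)=0$ at exactly the point you identify, namely to witness an unequal tag when recovering $S$. The only difference is presentational: the paper constructs the expansion $\mathfrak{M}^+$ of each model of $T_3$ and verifies the two translations between $T$ and $T_3$ directly, whereas you package the same construction as a definitional extension $T_3^{+}$ and its reduct $T=\Cn(T_3^{+})\cap\Fm_T$, invoking the standard definitional-extension facts twice and then transitivity of $\intertrans$.
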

\begin{proof}
Suppose $T_1$, $T_2$ and $T_3$ are as required in the statement of the lemma above, and assume that $T_1\leadsto
T_2\leadsto T_3$. Then $\LL_3=\LL_1\cup\{R,S\}$ for some relation symbols $R$ and
$S$. Suppose that $\rank(R)=n$ and $\rank(S)=m$. Let $l=\max\{n,m\}+2$. Let
$\LL\defeq \LL_1\cup \{B\}$, for some new relation symbol $B$ of rank $l$. Every
model $\mathfrak{M}$ for $\LL_3$ can be extended to a model $\mathfrak{M}^+$
for $\LL$ by defining $B^{\mathfrak{M}^+}$ as follows:
$$B^{\mathfrak{M}^+}\defeq\{(a_0,\ldots,a_{l-1})\in{M^l}:\exists\text{ an assignment }\tau\big(\tau(v_0)=a_0,\cdots,\tau(v_{l-1})=a_{l-1}\text{ and }\mathfrak{M},\tau\models\psi\big)\},$$ 
where $\psi(v_0,\ldots, v_{l-1})\defeq \big(R(v_0,\ldots, v_{n-1})\land
v_{l-2}=v_{l-1}\big)\lor \big(S(v_0,\ldots,v_{m-1})\land v_{l-2}\neq
v_{l-1}\big)$. Let 
\begin{equation*}
T\defeq \{\varphi\in \Fm : \mathfrak{M}^+\models \varphi, \text{ for every
  model } \mathfrak{M} \text{ for } T_3\}.
\end{equation*}
We need to prove that $T_1\leadsto T$ and $T\intertrans T_3$. To prove that
$T_1\leadsto T$, it is enough to show that $T_1\sqsubseteq T$ (because
$\LL=\LL_1\cup \{B\}$). Let $\varphi \in \Fm_1$. We have 
\begin{equation*}
T_1\models \varphi \iff T_2\models \varphi \iff T_3\models \varphi \iff
T\models \varphi,
\end{equation*}
where the first two equivalences follow by the assumption $T_1\sqsubseteq
T_2\sqsubseteq T_3$, and the last equivalence follows by the definition of
$T$.  To show that $T\intertrans T_3$, we define the translations $\tr:
Fm\to Fm_3$ and $\tr': Fm_3\to Fm$ as follows:
$$\tr: B(v_0,\ldots,v_{l-2},v_{l-1})\mapsto \psi(v_0,\ldots, v_{l-1})\quad  \text{ and} $$
\begin{eqnarray*}
\tr': R(v_0,\ldots,v_{n-1})&\mapsto&  \exists v_{l-1}\, \big(B(v_0,\ldots,v_{l-2},v_{l-1})\land
(v_{l-2}=v_{l-1})\big)\\
\tr': S(v_0,\ldots,v_{m-1})&\mapsto&  \exists v_{l-1}\, \big(B(v_0,\ldots,v_{l-2},v_{l-1})\land
(v_{l-2}\neq v_{l-1})\big)
\end{eqnarray*}

We have defined $\tr$ and $\tr'$ on specific basic formulas and these can be
extended in a unique way to their domains, see
Footnote~\ref{footnote:Tarski}. Let $\mathfrak{M}$ be a model for $T_3$. By definition of the extension
$\mathfrak{M}^+$, 
\begin{equation}\label{def:m}
\mathfrak{M}^+\models B(v_0,\ldots,v_{l-2},v_{l-1}) \leftrightarrow \psi(v_0,\ldots,v_{l-2},v_{l-1}).
\end{equation}
Thus, by \eqref{def:m}, we have 
\begin{eqnarray*}
\mathfrak{M}^+\models \tr'(S(v_0,\ldots,v_{m-1})) &\leftrightarrow& \exists v_{l-1}\,
\big(B(v_0,\ldots,v_{l-2},v_{l-1})\land v_{l-2}\neq v_{l-1}\big)\\
&\leftrightarrow& \exists v_{l-1}\,\big(((R(v_0,\ldots,v_{n-1})\land
v_{l-2}=v_{l-1})\land \\ 
&& (S(v_0,\ldots,v_{m-1})\land v_{l-2}\neq v_{l-1}))\land v_{l-2}\neq
v_{l-1}\big)\\
&\leftrightarrow& \exists v_{l-1}\, (S(v_0,\ldots,v_{m-1})\land v_{l-2}\neq
v_{l-1}))\\
&\leftrightarrow& S(v_0,\ldots,v_{m-1}).
\end{eqnarray*}
The last $\leftrightarrow$ follows by the assumption that the cardinality of
$\mathfrak{M}$ is at least $2$ (and hence the same is true for $\mathfrak{M}^+$). Similarly, $\mathfrak{M}^+\models
\tr'(R(v_0,\ldots,v_{n-1}))\leftrightarrow R(v_0,\ldots,v_{n-1})$. 
Therefore, by the fact that $\tr$ and $\tr'$ are translations, it follows that 
\begin{equation}\label{mplus}
\mathfrak{M}^+\models \varphi \leftrightarrow \tr(\varphi)\ \text{ and }  \mathfrak{M}^+\models \psi \leftrightarrow \tr'(\psi)
\end{equation}
for all $\varphi\in\Fm$ and $\psi\in \Fm_3$.
By \eqref{mplus}, it is not hard to see that $\tr$ and $\tr'$ are definitional
equivalences, and the desired follows. 
\end{proof}
The above lemma is a direct consequence of the following elementary fact. In $\Lg_{\omega}^{\omega}$ (under some conditions), for any two relations $R$ and $S$, there is a relation $M$ such that $M$ is definable in terms of $R$ and $S$ and, conversely, both $R$ and $S$ are definable in terms of $M$, see \cite{goodman}. The idea of the above proof is distilled from \cite[Theorem 2.3.22]{hmt71}. 
\begin{cor}\label{cor}
Suppose that $\alpha=\beta=\omega$. Let $T_1,T_2,\ldots,T_n$, for some $n\geq 2$, be theories such that $I(T_i,1)=0$, for each $1\leq i\leq n$. 
Then, 
$$T_1\leadsto T_2 \leadsto \cdots \leadsto T_n\implies \text{ there is a theory } T  \text{ such that } T_1\leadsto T \intertrans T_n.$$
\end{cor}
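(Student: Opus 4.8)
The plan is to prove the statement by induction on $n\ge 2$, collapsing the chain three terms at a time using Lemma~\ref{lem}. For the base case $n=2$ one simply takes $T\defeq T_2$: then $T_1\leadsto T$ holds by hypothesis and $T\intertrans T_2$ by reflexivity of $\intertrans$, so there is nothing to prove. The engine of the induction is the following reduction. Given a chain $T_1\leadsto\cdots\leadsto T_n$ with $n\ge 3$, I would apply Lemma~\ref{lem} to its last three members $T_{n-2}\leadsto T_{n-1}\leadsto T_n$ (legitimate since all three have no model of size $1$) to obtain a theory $S$ with
$$T_{n-2}\leadsto S\intertrans T_n.$$
Replacing the tail by $S$ yields the shorter chain $T_1\leadsto\cdots\leadsto T_{n-2}\leadsto S$, which has $n-1$ members. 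The induction hypothesis, applied to it, produces $T$ with $T_1\leadsto T\intertrans S$; since $\intertrans$ is transitive and $S\intertrans T_n$, we conclude $T_1\leadsto T\intertrans T_n$, as desired.

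The only nontrivial point, which I would isolate as a separate claim, is that the theory $S$ supplied by Lemma~\ref{lem} again satisfies $I(S,1)=0$, for otherwise the induction hypothesis would not apply to the shortened chain. This is where I expect the real work to be. I would prove the general fact that definitional equivalence neither creates nor destroys models of a given cardinality: if $T\intertrans T'$ and $T'$ has no model of size $1$, then neither does $T$. Concretely, let $\tr$ be the interpretation of $T'$ into $T$ coming from the definitional equivalence (Definition~\ref{defeq}), and suppose towards a contradiction that $\mathfrak{P}\models T$ has a one-element universe $P$. Since the translations of this paper are the identity on variables and commute with equality, the connectives and $\exists$, I would build a model $\mathfrak{Q}$ of $T'$ on the same universe $P$ by interpreting each relation symbol $Q\in\LL'$ of rank $s$ as $Q^{\mathfrak{Q}}\defeq\{\bar a\in P^s:\mathfrak{P},\tau[\bar v\mapsto\bar a]\models\tr(Q(\bar v))\}$. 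A routine induction on formula complexity then gives $\mathfrak{Q},\tau\models\varphi\iff\mathfrak{P},\tau\models\tr(\varphi)$ for every $\varphi\in\Fm'$, so that every theorem of $T'$ (whose $\tr$-image holds in $\mathfrak{P}$) holds in $\mathfrak{Q}$; thus $\mathfrak{Q}\models T'$ has size $1$, contradicting $I(T',1)=0$. Applying this to $S\intertrans T_n$ together with $I(T_n,1)=0$ gives $I(S,1)=0$.

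Two bookkeeping remarks then complete the argument. First, $\intertrans$ is an equivalence relation --- reflexivity via the identity translation, symmetry as recorded in Definition~\ref{defeq}(b), and transitivity by composing definitional equivalences --- so the appeals to reflexivity and transitivity above are justified. Second, the reduction genuinely lowers the length: the shortened chain $T_1\leadsto\cdots\leadsto T_{n-2}\leadsto S$ has $n-1$ members, each with no model of size $1$ (the $T_i$ by hypothesis and $S$ by the claim), so the induction hypothesis for $n-1$ applies. The main obstacle is thus not the combinatorics of the induction, which is immediate once Lemma~\ref{lem} is available, but the invariance of the count $I(\cdot,1)$ under $\intertrans$; everything else is transitivity of $\intertrans$ and careful index chasing.
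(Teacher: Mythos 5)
Your proof is correct and follows the same basic route as the paper's: induction on $n$ with Lemma~\ref{lem} as the collapsing device. The only cosmetic difference is the direction of the induction --- you collapse the last three members $T_{n-2}\leadsto T_{n-1}\leadsto T_n$ first and then apply the induction hypothesis to the shortened chain, whereas the paper applies the induction hypothesis to the tail $T_2\leadsto\cdots\leadsto T_n$ first, obtaining $T'$ with $T_2\leadsto T'\intertrans T_n$, and then collapses $T_1\leadsto T_2\leadsto T'$ at the front. The substantive difference is your explicit treatment of the hypothesis-preservation issue: Lemma~\ref{lem} can only be re-applied if the new theory ($S$ in your notation, $T'$ in the paper's) again has no one-element models, and the paper's own proof silently assumes this, so you correctly identified and filled a step the paper glosses over. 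Your model-theoretic verification of that invariance is sound, although the ``routine induction'' conceals a small subtlety in the atomic case: your definition of $Q^{\mathfrak{Q}}$ fixes one canonical variable tuple, and for other tuples $Q(v_{i_0},\ldots,v_{i_{s-1}})$ you need the translated substitution equivalences (Footnote~\ref{footnote:Tarski}), or the observation that in a one-point universe all assignments coincide. Worth noting: there is a one-line alternative in the spirit the paper itself uses in Theorems~\ref{NSEM} and~\ref{prop:spectrum}. Namely, $I(T_n,1)=0$ is equivalent to $T_n\models\lnot\Psi^{(1)}$, and $\lnot\Psi^{(1)}$ is an equality-only formula, hence fixed by every translation; so any interpretation of $T_n$ into $S$ immediately yields $S\models\lnot\Psi^{(1)}$, i.e.\ $I(S,1)=0$. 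Even more directly, $T_{n-2}\sqsubseteq S$ together with $T_{n-2}\models\lnot\Psi^{(1)}$ gives $S\models\lnot\Psi^{(1)}$, with no appeal to the definitional equivalence at all.
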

\begin{proof}
This can be proved by a simple induction on $n$. If $n=2$, then we are obviously done. Suppose that $n\geq 3$ and $T_1\leadsto T_2 \leadsto \cdots T_{n-1}\leadsto T_{n}$. If by induction hypothesis we can assume that there is $T'$ such that $T_2\leadsto T'\intertrans T_n$, then $T_1\leadsto T_2\leadsto T'\intertrans T_n$. Therefore, by the Lemma~\ref{lem}, there is theory $T$ such that $T_1\leadsto T\intertrans T_n$.
\end{proof}
\begin{thm}\label{NSEM}
Suppose that $\alpha=\beta=\omega$. Let $T$ and $T'$ be theories on finite languages. Then, 
$$I(T,1)=I(T',1)=0\implies \Cd_{\omega}^{\omega}(T,T')=\infty \text{ or } \Cd_{\omega}^{\omega}(T,T')\leq 2.$$
\end{thm}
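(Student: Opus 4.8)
The plan is to show that if $T$ and $T'$ are connected in $(\X,\intertrans,\sim)$, then they admit a common extension reached purely by \emph{adding} concepts, and then to collapse the two resulting $\leadsto$-chains by Corollary~\ref{cor}. If $T$ and $T'$ are not connected, then $\Cd_{\omega}^{\omega}(T,T')=\infty$ by definition of the step distance, so I assume they are connected and fix a path $T=S_0,S_1,\dots,S_n=T'$ whose consecutive members are related by $\intertrans$ or by $\sim$.

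The first ingredient is an \textbf{amalgamation lemma} for conservative extensions: if $T_a$ and $T_c$ are conservative extensions of a common theory $T_b$ with $\LL_a\cap\LL_c=\LL_b$, then $\Cn(T_a\cup T_c)$ is a conservative extension of both $T_a$ and $T_c$. I would prove this by the standard elementary-chain construction. Starting from a model of $T_a$ omitting a given $\LL_a$-formula $\varphi$, one repeatedly passes to elementary extensions whose $\LL_b$-reducts are expanded alternately to models of $T_a$ and of $T_c$, using that conservativity of $T_c$ over $T_b$ means every model of $T_b$ has an elementary extension expanding to a model of $T_c$; the union of the chain is a model of $T_a\cup T_c$ still omitting $\varphi$, which yields conservativity over $T_a$, and the argument is symmetric in $T_c$. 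This is the main obstacle, as it is the only genuinely model-theoretic step and must be carried out so that the two new relation symbols are transported correctly along the interleaved chains.

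Granting the amalgamation lemma, each consecutive pair $S_i,S_{i+1}$ lies below a common theory via $\leadsto$-chains: if $S_i\leadsto S_{i+1}$ or $S_{i+1}\leadsto S_i$ this is immediate, and if $S_i\intertrans S_{i+1}$ I would use the classical fact that definitionally equivalent theories share a common definitional extension $D$ (see \cite[Theorem 4.3.43]{HMT85}), which is reached from each of $S_i,S_{i+1}$ by finitely many one-symbol definitional, hence conservative, extensions. Iterating the amalgamation lemma up the path (renaming new relation symbols to keep them disjoint) then produces a single theory $U$ over a finite language with $T\leadsto\cdots\leadsto U$ and $T'\leadsto\cdots\leadsto U$. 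Crucially, every theory $T^{(j)}$ occurring on these two chains is a conservative extension of $T$ or of $T'$; since a model of size $1$ of a conservative extension restricts to a model of size $1$ of the base, the hypothesis $I(T,1)=I(T',1)=0$ forces $I(T^{(j)},1)=0$ throughout.

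Finally I would apply Corollary~\ref{cor} to each chain: it collapses $T\leadsto\cdots\leadsto U$ to a single step $T\leadsto V$ with $V\intertrans U$, and likewise gives $T'\leadsto W$ with $W\intertrans U$. Then $V\intertrans U\intertrans W$, so $V\intertrans W$, and the sequence $T,V,W,T'$ with relations $\sim,\intertrans,\sim$ is a path from $T$ to $T'$ of length $2$ in $(\X,\intertrans,\sim)$. Hence $\Cd_{\omega}^{\omega}(T,T')\le 2$, as required. The finiteness of the languages enters exactly here, ensuring the $\leadsto$-chains are finite so Corollary~\ref{cor} applies, while the condition $I(\cdot,1)=0$ is what licenses that corollary along chains deliberately kept above $T$ and $T'$.
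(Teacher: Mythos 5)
Your overall architecture --- reduce to a configuration where Corollary~\ref{cor} applies, then finish via $T\leadsto V\intertrans U$, $T'\leadsto W\intertrans U$ and transitivity of $\intertrans$ --- is the right endgame, but the middle of your argument rests on two claims that fail in the overlapping-language setting, which is the generic one here, and this is a genuine gap. First, the ``classical fact'' that definitionally equivalent theories share a common definitional (hence conservative) extension is only true when their languages are disjoint (or the shared symbols are translated identically): take $T_1=\Cn(\{\forall v_0\, R(v_0),\ \neg\Psi^{(1)}\})$ and $T_2=\Cn(\{\forall v_0\,\neg R(v_0),\ \neg\Psi^{(1)}\})$; these are definitionally equivalent via the translation $R(v_0)\mapsto\neg R(v_0)$, yet any theory extending both proves $\forall v_0\,R(v_0)$ and $\forall v_0\,\neg R(v_0)$, so it is inconsistent and cannot be conservative over either. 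Second, and more fundamentally, the object you set out to construct --- a single theory $U$ reached from both endpoints by pure $\leadsto$-chains --- need not exist at all under the hypotheses of the theorem. For $T=\Cn(\{\forall v_0\,R(v_0),\ \neg\Psi^{(1)}\})$ and $T'=\Cn(\{\exists v_0\,R(v_0)\land\exists v_0\,\neg R(v_0)\})$, both are conservative extensions of the empty-language theory of ``at least two elements'', so $\Cd_{\omega}^{\omega}(T,T')\le 2$ and $I(T,1)=I(T',1)=0$; but any common conservative extension would have to make $R$ both the whole universe and a proper subset of it, hence be inconsistent. Your parenthetical ``renaming new relation symbols to keep them disjoint'' cannot repair this as stated, because the clash can occur between the two fixed endpoints, which you are not free to rename; a genuine repair must replace theories along the path (possibly including $T'$ itself) by definitionally equivalent renamed copies and thread the resulting extra $\intertrans$ steps, which cost $0$, through the final path. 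That bookkeeping is exactly what is missing, and the hypothesis $\LL_a\cap\LL_c=\LL_b$ of your amalgamation lemma can fail at every valley of the path for the same reason.

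For contrast, the paper sidesteps all of this by going downward rather than upward: validity of empty-language formulas is invariant along any path (both conservative extension and definitional equivalence preserve them), so finite distance forces $T$ and $T'$ to have the same empty-language consequences $T_0$; both are then conservative extensions of $T_0$, the hypothesis $I(T,1)=I(T',1)=0$ puts $\neg\Psi^{(1)}$ into $T_0$, and one climbs from $T_0$ up to each of $T$ and $T'$ one relation symbol at a time (finitely many steps, by finiteness of the languages) and collapses each chain with Corollary~\ref{cor}. The common lower bound always exists, whereas your common upper bound sometimes does not; that asymmetry is the essential point your proposal misses.
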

\begin{proof}
Let $T$ and $T'$ be theories on finite languages and suppose that $I(T,1)=I(T',1)=0$. Let us assume that $\Cd_{\omega}^{\omega}(T_1,T_2)<\infty$. Let $\LL$ be the empty language. For each $\varphi\in\Fm$,
\begin{equation}\label{reduct}
T_1\models\varphi\iff T_2\models \varphi.
\end{equation}
This is true because $\Cd_{\omega}^{\omega}(T,T')<\infty$ and the validity of each $\varphi\in\Fm$ is preserved under conservative extensions and definitional equivalences. Let $T=\{\varphi\in\Fm:T_1\models\varphi\text{ and }T_2\models\varphi\}$. Then, by \eqref{reduct}, it is true that $T\sqsubseteq T_1$ and $T\sqsubseteq T_2$. We also claim that
\begin{equation}\label{finish}
\Cd_{\omega}^{\omega}(T,T_1)\leq 1 \ \text{ and } \ \Cd_{\omega}^{\omega}(T,T_2)\leq 1.
\end{equation}
To show \eqref{finish}, let us assume that $\LL=\{R_i:i<m\}$, for some finite $m$. If $m=0$, then $T\equiv T_1$ and thus $\Cd_{\omega}^{\omega}(T,T_1)=0$. Assume that $m\not=0$. Let $\LL_0^{\star}=\{R_0\},\ldots, \LL_{m-1}^{\star}=\{R_0,\ldots,R_{m-1}\}$ and, for each $0\leq i\leq m-1$, $T_i^{\star}=\{\varphi\in\Fm_i^{\star}:T_1\models\varphi\}$. Hence, 
$T\leadsto T_0^{\star}\leadsto\cdots\leadsto T_{m-1}^{\star}\equiv T_1$. Clearly, for each $0\leq i\leq m-1$, $I(T_i^{\star},1)=0$ because $\lnot\Psi^{(1)}=\lnot\Big(\exists v_0\,\forall v_1\,(v_0=v_1)\Big)$ is a theorem of $T$, and hence is a theorem of $T_i^{\star}$. Thus, by Corollary~\ref{cor}, it follows that $\Cd_{\omega}^{\omega}(T,T_1)\leq 1$. Similarly, one can show that $\Cd_{\omega}^{\omega}(T,T_2)\leq 1$. Therefore, $\Cd_{\omega}^{\omega}(T_1,T_2)\leq \Cd_{\omega}^{\omega}(T,T_1)+\Cd_{\omega}^{\omega}(T,T_2)\leq 2$.
\end{proof}
We prove one more statement investigating the connection between the spectrum of theories and conceptual distance in $\Lg_{\omega}^{\omega}$.
\begin{thm}\label{prop:spectrum}Suppose that $\alpha=\beta=\omega$. Let $T$ and $T'$ be two theories formulated in countable languages. Then,
\begin{equation}\label{speceq}
    \Cd_{\omega}^{\omega}(T,T')<\infty \implies (\forall\text{ cardinal }\kappa) \,\big[ I(T,\kappa)\neq 0 \iff I(T',\kappa)\neq 0\big].
  \end{equation}
If $T$ and $T'$ are formulated in finite languages, then the converse of \eqref{speceq} is also true.
\end{thm}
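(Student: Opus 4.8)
The plan is to prove the two implications separately. For the forward implication I will show that every elementary edge of a witnessing path preserves the \emph{support} of the spectrum, i.e.\ the predicate $\kappa\mapsto[I(\,\cdot\,,\kappa)\neq 0]$, and then induct along the path. Since $\Cd_{\omega}^{\omega}(T,T')<\infty$, there is a finite path $T=S_0,S_1,\dots,S_n=T'$ in the cluster network $(\X,\intertrans,\sim)$, each consecutive pair being related by $\intertrans$ or by $\sim$ (that is, by $\leadsto$ in one of the two directions). It therefore suffices to establish, for every cardinal $\kappa$, that $I(S_i,\kappa)\neq 0 \iff I(S_{i+1},\kappa)\neq 0$ for each edge.

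For a definitional-equivalence edge $S_i\intertrans S_{i+1}$ this is immediate: definitional equivalence sets up a cardinality-preserving bijection between models (each model of one theory expands, by the explicit definitions given by the two translations, to a model of the other on the \emph{same} universe), so in fact $I(S_i,\kappa)=I(S_{i+1},\kappa)$ for all $\kappa$. For a one-concept edge $S_i\leadsto S_{i+1}$, say $\LL_{i+1}=\LL_i\cup\{R\}$ and $S_i\sqsubseteq S_{i+1}$, the downward direction is easy: the $\LL_i$-reduct of any model of $S_{i+1}$ is a model of $S_i$ of the same cardinality, so $I(S_{i+1},\kappa)\neq 0$ implies $I(S_i,\kappa)\neq 0$. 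For the upward direction I take a model $\mathfrak{M}\models S_i$ of cardinality $\kappa$ and observe that $S_{i+1}\cup\mathrm{ElDiag}(\mathfrak{M})$ is consistent: any finite inconsistency would, after replacing the new constants by quantified variables, yield an $\LL_i$-sentence provable from $S_{i+1}$ but refuted by $\mathfrak{M}$, contradicting the conservativity of $S_i\sqsubseteq S_{i+1}$. A model of this set is a model of $S_{i+1}$ containing an elementary copy of $\mathfrak{M}$; when $\kappa$ is finite the diagram pins the cardinality to $\kappa$, and when $\kappa$ is infinite I cut down to an elementary submodel of cardinality $\kappa$ by the downward L\"owenheim--Skolem theorem. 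Iterating these two directions along the path gives $I(T,\kappa)\neq 0\iff I(T',\kappa)\neq 0$.

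For the converse, assuming finite languages, I reduce each theory to its empty-language core and connect the cores. Let $\Fm_{\emptyset}$ be the set of formulas with equality only, and put $T_0\defeq\Cn(T)\cap\Fm_{\emptyset}$ and $T_0'\defeq\Cn(T')\cap\Fm_{\emptyset}$. Since every $\Fm_{\emptyset}$-sentence is, up to logical equivalence, a Boolean combination of the cardinality sentences $\Psi^{(n)}$, whether $T\models\varphi$ for $\varphi\in\Fm_{\emptyset}$ depends only on the support of the spectrum of $T$; using compactness and L\"owenheim--Skolem in the finite-language setting, equal supports of $T$ and $T'$ give $T_0\equiv T_0'$, hence $\Cd_{\omega}^{\omega}(T_0,T_0')=0$. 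Next, enumerating $\LL=\{R_1,\dots,R_m\}$ and setting $T_0^{(i)}\defeq \Cn(T)\cap\Fm^{(i)}$ for the sublanguage $\{R_1,\dots,R_i\}$, one checks from the criterion $T_1\sqsubseteq T_2\iff T_1\equiv\Cn(T_2)\cap\Fm_1$ that $T_0^{(i-1)}\sqsubseteq T_0^{(i)}$, so $T_0\leadsto T_0^{(1)}\leadsto\cdots\leadsto T_0^{(m)}\equiv T$ is a path of length $m$ and $\Cd_{\omega}^{\omega}(T_0,T)\le m=\vert\vert\LL\vert\vert$. The same argument gives $\Cd_{\omega}^{\omega}(T_0',T')\le \vert\vert\LL'\vert\vert$, and the triangle inequality (Theorem~\ref{thm:min-path}(c)) yields $\Cd_{\omega}^{\omega}(T,T')\le\vert\vert\LL\vert\vert+0+\vert\vert\LL'\vert\vert<\infty$.

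The main obstacle, I expect, is the bookkeeping in the upward step of the forward direction: an arbitrary model of $S_i$ need not admit an expansion to $S_{i+1}$ on the same universe, so I must pass through an elementary extension and then apply downward L\"owenheim--Skolem, which lands exactly at an infinite $\kappa$ only when the ambient language has size $\le\kappa$. To license this for all infinite $\kappa$ one needs the languages occurring along the path to stay countable; this is automatic for the $\leadsto$-edges, which change the language by a single symbol, but requires care for the $\intertrans$-edges, where I would use that conceptual size is preserved by definitional equivalence (Proposition~\ref{prop:czeq}) and stays countable throughout, so that each theory on the path may be replaced by a definitionally equivalent one in a countable language.
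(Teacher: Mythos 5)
Your converse direction (for finite languages) is correct and essentially the paper's own argument: pass to the equality-only core $T_0=\Cn(T)\cap\Fm_\emptyset$, observe that equal spectrum supports force $T_0\equiv T_0'$, and climb back up one relation symbol at a time. The forward direction, however, has a genuine gap. You prove it edge-by-edge, and your upward step across a one-concept edge $S_i\leadsto S_{i+1}$ (elementary diagram plus downward L\"owenheim--Skolem) needs the language of $S_{i+1}$ to have cardinality at most $\kappa$; as you yourself note, this requires the languages along the path to stay countable. But $\Cd_{\omega}^{\omega}$ is computed in the cluster network of \emph{all} theories of $\Lg_{\omega}^{\omega}$, so the intermediate theories of a witnessing path need not have countable languages, and your edge claim is actually \emph{false} without that restriction. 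Concretely, let $\LL=\{P_\xi:\xi<\omega_1\}$ (unary) and let $S^{+}$ be the $\LL\cup\{R\}$-theory saying that $R$ is a linear order, each $P_\xi$ is an $R$-initial segment possessing an $R$-greatest element, and the $P_\xi$ are pairwise distinct. Every model of $S^{+}$ has at least $\aleph_1$ elements (distinct initial segments with greatest elements have distinct greatest elements), so $I(S^{+},\aleph_0)=0$; yet $S\defeq\Cn(S^{+})\cap\Fm_{\LL}$ has a countable model, namely $\mathbb{Q}$ with the $P_\xi$ interpreted as pairwise distinct Dedekind cuts: any single $\LL$-consequence of $S^{+}$ mentions only finitely many $P_\xi$, and the corresponding finite reduct of the cut structure is elementarily equivalent to the reduct of a suitable model of $S^{+}$ (a chain of distinct sets with all pieces infinite, nested in the matching order). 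Since $S\leadsto S^{+}$, support of the spectrum at $\aleph_0$ is \emph{not} preserved along this edge; in fact this shows the theorem itself fails if the countable-language hypothesis is dropped from the endpoints, so any correct proof must exploit that hypothesis exactly where it is available.

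Your proposed repair does not close this gap. Proposition~\ref{prop:czeq} concerns conceptual \emph{size}, which takes values in $\NN\cup\{\infty\}$ and by design cannot distinguish countably many concepts from uncountably many --- precisely the distinction you need; and the claim that each theory on the path may be replaced by a definitionally equivalent theory in a countable language is a nontrivial lemma that you neither state precisely nor prove (it is true for theories with countably many concepts, but you would first have to prove that countable concept cardinality propagates along both kinds of edges, and after the replacement the $\leadsto$-edges of the path are destroyed, so the edge argument must be redone on top of that). The paper avoids all of this by never applying model theory to the intermediate theories: it only tracks equality formulas ($\neg\Psi^{(\kappa)}$ for finite $\kappa$, and the ``there exist at least $n$ elements'' sentences for the infinite case). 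Such formulas lie in $\Fm_1$ for every language, so their validity is preserved across a $\sqsubseteq$-edge by the very definition of conservative extension, and across a $\intertrans$-edge because translations fix equality formulas and definitional equivalences are faithful (Proposition~\ref{prop:faithful}); compactness and L\"owenheim--Skolem are then invoked only at the endpoints $T$ and $T'$, where countability of the language is hypothesized. Either adopt that route, or supply the countability-propagation and replacement lemmas your route genuinely requires.
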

\begin{proof}In this proof, we will make use of the formulas $\Psi^{(n)}$'s defined in \eqref{Psi-n}. Recall that with the formula $\Psi^{(n)}$, we require every model to be of cardinality $n$. Let $T$ and $T'$ be as required, and let $\kappa$ be any cardinal. Suppose that $\kappa$ is finite, then $T$ does not have a model of size $\kappa$ iff $T\models \neg\Psi^{(\kappa)}$. But, $T'\models\neg\Psi^{(\kappa)}\iff T\models \neg\Psi^{(\kappa)}$ because of the assumption $\Cd_{\omega}^{\omega}(T,T')<\infty$ and the fact that the validity of $\Psi^{(\kappa)}$ is preserved under conservative extensions and definitional equivalences. Hence, $T'$ has a model of cardinality $\kappa$ iff $T$ has a model of cardinality $\kappa$. If $\kappa$ is an infinite cardinality, then by L\"ovenheim--Skolem Theorem, $T$ and $T'$ have models of cardinality $\kappa$ iff they have infinite models. $T$ has an infinite model iff $T\models \exists v_0\, \exists v_1 \cdots \exists v_{n-1}\,\left ( \bigwedge_{0\leq i\neq j\leq n-1} v_i\neq v_j\right)$ for all $n\in \NN$. 
Again, the validity of the formulas $\exists v_0\, \exists v_1 \cdots \exists v_{n-1}\,\left ( \bigwedge_{0\leq i\neq j\leq n-1} v_i\neq v_j\right)$ is preserved under conservative extensions and definitional equivalences. Thus, the assumption $\Cd_{\omega}^{\omega}(T,T')<\infty$ implies that
$$T'\models \exists v_0\, \exists v_1 \cdots \exists v_{n-1}\,\left ( \bigwedge_{0\leq i\neq j\leq n-1} v_i\neq v_j\right)\iff T\models \exists v_0\, \exists v_1 \cdots \exists v_{n-1}\,\left ( \bigwedge_{0\leq i\neq j\leq n-1} v_i\neq v_j\right).$$ 
Hence, $T$ has an infinite model and thus a model of cardinality $\kappa$ iff $T'$ has such a model. 

To prove the converse of \eqref{speceq}, let us assume that $T$ and $T'$ are formulated in finite languages $\LL$ and $\LL'$, respectively. Assume that, for every cardinal $\kappa$, $T$ has a model of cardinality $\kappa$ iff $T'$ has a model of the same cardinality. Let $\LL_{0}$ be the empty language. Then, for every $\varphi\in Fm_0$,
$$T\models\varphi\iff T'\models\varphi.$$
Let $T_0=\{\varphi\in Fm_0: T\models\varphi \text{ and }T'\models\varphi\}$. Thus, $T_0\sqsubseteq T$ and $T_0\sqsubseteq T'$. Now, we can add the whole $\LL$ to $\LL_0$ in finitely many steps, because there only finitely many relation symbols in $\LL$, thus $\Cd_{\omega}^{\omega}(T_0,T)<\infty$. Similarly, $\Cd_{\omega}^{\omega}(T_0,T')<\infty$. Therefore, 
\begin{equation*}
\Cd_{\omega}^{\omega}(T,T')\leq \Cd_{\omega}^{\omega}(T_0,T)+\Cd_{\omega}^{\omega}(T_0,T')<\infty.\qedhere
\end{equation*}
\end{proof}

\begin{cor}
The conceptual distance between the theories of any two finite models of different cardinalities is infinite. More precisely, if $\mathfrak{A}$ and $\mathfrak{B}$ are two finite models of different cardinality, then $\Cd_{\omega}^{\omega}\big(\Th(\mathfrak{A}),\Th(\mathfrak{B})\big)=\infty$.
\end{cor}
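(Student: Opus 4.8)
The plan is to derive this corollary directly from Theorem~\ref{prop:spectrum}, specifically from its contrapositive. The statement concerns $\Th(\mathfrak{A})$ and $\Th(\mathfrak{B})$ for two finite models of distinct cardinalities, say $\|A\|=\kappa$ and $\|B\|=\lambda$ with $\kappa\neq\lambda$. The key observation is that the theory of a single finite model has a very simple spectrum: $\Th(\mathfrak{A})$ has a model of cardinality $\kappa$ (namely $\mathfrak{A}$ itself), but since $\mathfrak{A}$ is finite, the sentence $\Psi^{(\kappa)}$ from \eqref{Psi-n} (asserting that there are exactly $\kappa$-many objects) is true in $\mathfrak{A}$, hence $\Th(\mathfrak{A})\models\Psi^{(\kappa)}$. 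This forces every model of $\Th(\mathfrak{A})$ to have cardinality exactly $\kappa$, so $I(\Th(\mathfrak{A}),\mu)=0$ for every cardinal $\mu\neq\kappa$.

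First I would note that $\Th(\mathfrak{A})$ and $\Th(\mathfrak{B})$ are formulated in the common language $\LL$ of $\mathfrak{A}$ and $\mathfrak{B}$ (the corollary tacitly assumes a shared finite language, as is standard when comparing theories of two models); if the languages differed, the conceptual distance would be infinite for trivial reasons and the claim would hold immediately. Since $\kappa$ is finite, the language is finite, so Theorem~\ref{prop:spectrum} applies in both directions. Then I would exhibit the failure of the spectrum-matching condition: taking the witnessing cardinal to be $\lambda$, we have $I(\Th(\mathfrak{B}),\lambda)\neq 0$ because $\mathfrak{B}$ itself is a model of $\Th(\mathfrak{B})$ of cardinality $\lambda$, whereas $I(\Th(\mathfrak{A}),\lambda)=0$ by the argument above (as $\lambda\neq\kappa$ and every model of $\Th(\mathfrak{A})$ has cardinality $\kappa$). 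Hence the biconditional $\big[I(\Th(\mathfrak{A}),\lambda)\neq 0 \iff I(\Th(\mathfrak{B}),\lambda)\neq 0\big]$ fails.

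By the contrapositive of the implication \eqref{speceq} in Theorem~\ref{prop:spectrum}, the failure of this biconditional for some cardinal forces $\Cd_{\omega}^{\omega}\big(\Th(\mathfrak{A}),\Th(\mathfrak{B})\big)=\infty$, which is exactly the conclusion sought. This completes the argument.

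I do not expect any serious obstacle here; the only point requiring mild care is the justification that $\Th(\mathfrak{A})$ pins down the cardinality exactly, i.e., that $\Psi^{(\kappa)}$ is a genuine first-order sentence of $\Lg_{\omega}^{\omega}$ valid in $\mathfrak{A}$ (which it is, since $\kappa$ is finite and $\Psi^{(\kappa)}$ is the finite formula defined in \eqref{Psi-n}), and the bookkeeping that both theories live over the same finite language so that Theorem~\ref{prop:spectrum} is genuinely applicable. Everything else is a direct instantiation of the already-proven theorem.
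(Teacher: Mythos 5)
Your core argument is exactly the derivation the paper intends: the corollary appears immediately after Theorem~\ref{prop:spectrum} with no separate proof, precisely because the theory of a finite model of cardinality $\kappa$ contains $\Psi^{(\kappa)}$ from \eqref{Psi-n} and hence has no models of any other cardinality, so the biconditional in \eqref{speceq} fails at the cardinal $\lambda=\vert\vert B\vert\vert$, and the contrapositive of \eqref{speceq} forces $\Cd_{\omega}^{\omega}\big(\Th(\mathfrak{A}),\Th(\mathfrak{B})\big)=\infty$. That chain is correct and is essentially the paper's proof.

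However, two of your auxiliary remarks are wrong, and one of them leaves a hole in your case analysis as written. First, the claim that ``if the languages differed, the conceptual distance would be infinite for trivial reasons'' is false: conceptual distance is designed exactly to connect theories in \emph{different} languages, since $T\leadsto T'$ changes the language by one relation symbol. The paper's own results witness this: in Theorem~\ref{thm:non-discrete} the theories $T_0^{\star}$ and $T_n^{\star}$ live in different languages yet $\Cd_{\alpha}^{\beta}(T_0^{\star},T_n^{\star})=n<\infty$, and in Theorem~\ref{dsrck} the theories $\ax{SpecRel}$ and $\ax{ClassicalKin}$ are in different languages yet at distance $1$. (You may be conflating this with \emph{axiomatic} distance, where distinct languages do trivially force distance $\infty$.) Second, ``since $\kappa$ is finite, the language is finite'' is a non sequitur: a finite model can interpret infinitely many relation symbols. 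Neither error is fatal, because neither assumption is needed: you only use the forward implication \eqref{speceq}, which Theorem~\ref{prop:spectrum} asserts for arbitrary countable languages --- not necessarily finite, and not necessarily equal. So the correct fix is simply to delete the case split and the finite-language remark; your main argument then covers the different-language case verbatim, and only the converse direction of the theorem (which you never need) requires finite languages.
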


For instance, given two cyclic groups $\langle k_1\rangle$ and $\langle k_2\rangle$ of orders $5$ and $7$, respectively, the conceptual distance between the theories of these groups is $\infty$. This might seem strange; these theories are about similar structures. But if we look carefully at the statement of the above corollary, we will find that it talks about theories of structures, not structures themselves. In other words, the conceptual distance between the theories of $\langle k_1\rangle$ and $\langle k_2\rangle$ cannot be granted as a distance between these two groups as algebraic structures. This conceptual distance can be rather considered as a distance between the Lindenbaum-Tarski algebras of the theories of these groups, which are of course of different nature than the groups themselves. 

\begin{cor}
There are infinitely many theories that are, in terms of conceptual distance, infinitely far from each other in $\Lg_{\omega}^{\omega}$.  
\end{cor}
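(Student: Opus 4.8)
The plan is to exhibit an explicit infinite family of theories and then appeal directly to the corollary immediately preceding this one, which already establishes that the theories of two finite models of different cardinalities are infinitely far apart. First I would fix, for each positive integer $n$, a finite model $\mathfrak{A}_n$ of cardinality $n$. The cleanest choice is to work over the empty language, where for each $n$ there is, up to isomorphism, exactly one model, namely an $n$-element set carrying no relations; this keeps the languages finite (hence countable), as required for the preceding results. I would then propose the family $\{\Th(\mathfrak{A}_n):n\geq 1\}$ as the witness to the statement.

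Two points need to be verified. First, that the family is genuinely infinite, i.e.\ that its members are pairwise distinct. This is immediate from the sentences $\Psi^{(n)}$ used throughout Section~\ref{cd}: for $n\neq m$ we have $\mathfrak{A}_n\models\Psi^{(n)}$, so $\Psi^{(n)}\in\Th(\mathfrak{A}_n)$, whereas $\mathfrak{A}_m\models\neg\Psi^{(n)}$, so $\neg\Psi^{(n)}\in\Th(\mathfrak{A}_m)$; hence the two theories are different. Second, and this is the entire substance of the claim, that any two members are at infinite conceptual distance: for $n\neq m$ the models $\mathfrak{A}_n$ and $\mathfrak{A}_m$ are finite and of different cardinality, so the preceding corollary yields $\Cd_{\omega}^{\omega}\big(\Th(\mathfrak{A}_n),\Th(\mathfrak{A}_m)\big)=\infty$ with no further work.

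I anticipate no real obstacle, since the corollary does all the heavy lifting and the present statement is essentially a repackaging of it. The only point requiring a little care is to ensure that the chosen family realises infinitely many \emph{distinct} cardinalities simultaneously, rather than, say, infinitely many models of one fixed cardinality — the latter would fail to separate the theories at all in conceptual distance. Selecting exactly one representative $\mathfrak{A}_n$ per finite cardinality $n\geq 1$ handles this cleanly, and the argument is then complete.
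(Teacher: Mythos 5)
Your proposal is correct and follows exactly the route the paper intends: the corollary is stated without proof precisely because it is an immediate consequence of the preceding corollary, applied to theories of finite models of pairwise distinct cardinalities. Your explicit choice of the empty-language models $\mathfrak{A}_n$ (one per cardinality $n\geq 1$), together with the observation that distinct cardinalities are what matters, is exactly the intended instantiation, so nothing further is needed.
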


\begin{problem}Let $\X$ be the class of all complete and consistent theories in $\Lg_{\omega}^{\omega}$, and let $T_1,T_2\in \X$. Is it always true that $$\Cd_{\omega}^{\omega}(T_1,T_2)=\Cd_{\X}(T_1,T_2)?$$
\end{problem}

\section{Conceptual distance in physics}\label{applications}
Each physical theory is established based on some preliminary decisions. These decisions are suggested by the accumulation and the assimilation of new knowledge. The methods used to improve physical theories are intuitively conceived and applied in a fruitful way, but many obvious ambiguities have appeared. To clarify these ambiguities, it was critical to introduce the {\emph{logical foundation of physical theories}}.

Even today the logic based axiomatic foundation of physical theories is intensively investigated by several research groups. For example, the Andr\'eka--N\'emeti school axiomatizes and investigates special and general relativity theories within ordinary first order logic, see, e.g., \cite{BigBook}, \cite{AMNBerlin} and \cite{Synthese}. For similar approaches related to other physical theories, see, e.g., \cite{Baltag2005} and \cite{krause2017}.

Following Andr\'eka--N\'emeti school's traditions, two theories \ax{ClassicalKin} and \ax{SpecRel} are formulated in ordinary first order logic $\Lg_{\omega}^{\omega}$ to capture the intrinsic structures of classical and relativistic kinematics. For the precise definitions of these theories, one can see \cite[p.67 and p. 69]{ClassRelKin}. In this section, we will investigate the conceptual distance between these two theories.

In \cite{diss} and \cite{ClassRelKin}, it was shown that these two theories can be turned definitionally equivalent by the following two concept manipulating steps: 
\begin{enumerate}
\item[(1)] adding the concept of an observer ``being stationary''  to the theory of relativistic kinematics \ax{SpecRel}, and 
\item[(2)] removing the concept of observers ``not moving slower than light'' from the theory of classical kinematics \ax{ClassicalKin}.
\end{enumerate}
Then, it was shown that even if observers ``not moving slower than light'' are removed from \ax{ClassicalKin} the resulting theory remains definitionally equivalent to \ax{ClassicalKin} and hence adding only the concept of ``being stationary'' to \ax{SpecRel} is enough to make the two theories equivalent. Thus, it follows that the conceptual distance between relativistic and classical kinematics is $1$.

\begin{thm}\label{dsrck}
Classical and relativistic kinematics are distinguished from each other by only one concept, namely the existence of some distinguished observers captured by formula \eqref{E} below, i.e., $\Cd_{\omega}^{\omega}(\ax{ClassicalKin},\ax{SpecRel})=1$.\footnote{It is worth noting that in the proof of Theorem~\ref{dsrck}, we add only a unary concept $E$ to $\ax{SpecRel}$ to get a theory definitionally equivalent to $\ax{ClassicalKin}$.}
\end{thm}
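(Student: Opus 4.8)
The plan is to prove $\Cd_{\omega}^{\omega}(\ax{ClassicalKin},\ax{SpecRel})=1$ by establishing the two inequalities $\Cd_{\omega}^{\omega}(\ax{ClassicalKin},\ax{SpecRel})\leq 1$ and $\Cd_{\omega}^{\omega}(\ax{ClassicalKin},\ax{SpecRel})\geq 1$ separately. For the upper bound, following the summary preceding the statement, I would exhibit a single one-concept-extension step connecting the two theories up to definitional equivalence. Concretely, I would introduce a unary relation symbol $E$ (the distinguished ``stationary'' observers), let $\LL'=\LL_{\ax{SpecRel}}\cup\{E\}$, and define a theory $\ax{SpecRel}^+$ on $\LL'$ that is a conservative extension of $\ax{SpecRel}$, so that $\ax{SpecRel}\leadsto\ax{SpecRel}^+$ witnesses $\ax{SpecRel}\sim\ax{SpecRel}^+$. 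The formula \eqref{E} should be written to pick out exactly the observers that are at absolute rest; the content of the cited results in \cite{diss} and \cite{ClassRelKin} is that $\ax{SpecRel}^+$ becomes definitionally equivalent to $\ax{ClassicalKin}$, i.e. $\ax{SpecRel}^+\intertrans\ax{ClassicalKin}$.

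Granting those external results, the upper bound is then a short computation in the cluster network $(\X,\intertrans,\sim)$: the path $\ax{ClassicalKin}\intertrans\ax{SpecRel}^+$ (a $0$-step, since $\intertrans$ is the equivalence relation) followed by $\ax{SpecRel}^+\sim\ax{SpecRel}$ (a single $1$-step) is a path of length $1$ from $\ax{ClassicalKin}$ to $\ax{SpecRel}$, so by Definition~\ref{def-step-distance} we get $\Cd_{\omega}^{\omega}(\ax{ClassicalKin},\ax{SpecRel})\leq 1$. The key technical obligation here is verifying that $\ax{SpecRel}^+$ is genuinely a conservative extension of $\ax{SpecRel}$ (Definition~\ref{def-con-ext}), i.e. that adjoining $E$ and its defining axioms proves no new $\ax{SpecRel}$-formulas, and that the two explicit interpretations between $\ax{SpecRel}^+$ and $\ax{ClassicalKin}$ satisfy the round-trip conditions of Definition~\ref{defeq}(b).

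For the lower bound $\Cd_{\omega}^{\omega}(\ax{ClassicalKin},\ax{SpecRel})\geq 1$, it suffices to show the two theories are not definitionally equivalent, since by Theorem~\ref{thm:min-path}(a) distance $0$ holds exactly when $\ax{ClassicalKin}\intertrans\ax{SpecRel}$. I would argue this by exhibiting a concept present in one theory and provably absent in the other: classical kinematics has observers at absolute rest (the concept $E$ is definable in $\ax{ClassicalKin}$), whereas in $\ax{SpecRel}$ no $\ax{SpecRel}$-formula can define a nonempty such class, since relativistic kinematics admits the symmetry group of Poincaré transformations that moves any inertial observer to any other, so no first-order formula in the relativistic vocabulary can single out a distinguished rest frame. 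Since definitional equivalence preserves definability of all concepts, the existence of $E$ in $\ax{ClassicalKin}$ but not in $\ax{SpecRel}$ blocks $\ax{ClassicalKin}\intertrans\ax{SpecRel}$, giving the strict positivity.

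I expect the main obstacle to be the lower bound's non-definability claim, rather than the upper bound. The upper bound reduces, modulo the cited equivalence, to checking conservativity and writing down the mutual interpretations, which is bookkeeping. By contrast, rigorously showing that no $\ax{SpecRel}$-formula defines a nonempty absolute-rest class requires a genuine model-theoretic argument — most naturally an automorphism argument using that the relativistic spacetime symmetries act transitively on inertial observers, so that any $\ax{SpecRel}$-definable class of observers is invariant under these symmetries and hence is either empty or contains all observers, contradicting the intended meaning of $E$. Constructing the relevant models of $\ax{SpecRel}$ and verifying the transitivity of the symmetry action is where the real work lies.
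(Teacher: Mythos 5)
Your overall decomposition is the same as the paper's: an upper bound via a single one-concept extension of $\ax{SpecRel}$ by the unary symbol $E$, and a lower bound via non-definitional-equivalence. For the upper bound the paper does exactly what you outline: it sets $\ax{SpecRel^E}=\ax{SpecRel}\cup\{\ax{AxPrimitiveEther}\}$ and proves conservativity by the model-expansion argument you defer: given any model $\mathfrak{M}\models\ax{SpecRel}$, pick $a\in\mathsf{IOb}^{\mathfrak{M}}$ (such $a$ exists since $\ax{SpecRel}\models\exists v_0\,\mathsf{IOb}(v_0)$) and interpret $E$ as the set of inertial observers stationary with respect to $a$; the expanded model satisfies $\ax{AxPrimitiveEther}$, so any formula refutable in $\ax{SpecRel}$ is also refutable in $\ax{SpecRel^E}$. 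You flag this as an obligation but do not carry it out; note that it is the only part of the upper bound the paper actually proves rather than cites (the definitional equivalence $\ax{SpecRel^E}\intertrans\ax{ClassicalKin}$ is quoted from the literature both in your plan and in the paper's proof).

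The genuine divergence, and the gap, is in the lower bound. The paper simply cites Theorem 5 of the same external sources for the fact that $\ax{SpecRel}$ and $\ax{ClassicalKin}$ are not definitionally equivalent, whereas you propose to re-prove it by a Poincar\'e-symmetry argument. The idea is right in spirit (this is how the cited theorem is proved), but your bridging principle --- that ``definitional equivalence preserves definability of all concepts'' so the non-definability of a rest class in $\ax{SpecRel}$ blocks equivalence --- does not work as stated. Under a putative definitional equivalence, the $\ax{ClassicalKin}$-formula defining $E$ translates to \emph{some} formula in $\ax{SpecRel}$'s language, but nothing forces that formula to define a subclass of $\ax{SpecRel}$'s own $\mathsf{IOb}$, nor to ``mean'' absolute rest; so the observation that every $\ax{SpecRel}$-definable subclass of $\mathsf{IOb}$ is empty or all of $\mathsf{IOb}$ does not by itself yield a contradiction. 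The correct bridge is that a definitional equivalence puts models of the two theories in correspondence on the same universe with the same automorphism group; one must then show that no model of $\ax{ClassicalKin}$ can live on the universe of a suitably symmetric model of $\ax{SpecRel}$ with all of its relations invariant under that model's automorphism group, using that $\ax{ClassicalKin}$ provably makes its ether class (or its absolute simultaneity) nonempty and proper, together with an orbit analysis of the automorphism group (Alexandrov--Zeeman type results). That is a genuine piece of work --- essentially the content of the theorem the paper cites --- and your sketch conflates it with the much weaker non-definability statement. Either carry out that argument in full or, as the paper does, cite the non-equivalence theorem directly.
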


\begin{proof}
The key to this result is the surprising theorem stating that the only concept which needs to be added to \ax{SpecRel} to make it definitional equivalent to \ax{ClassicalKin} is a concept distinguishing a set of observers that are ``being at absolute rest'' as shown in \cite[p.72]{diss} and \cite[p.110]{ClassRelKin}.  Let $E$ be a unary relation symbol corresponding to this basic concept. Axiom \ax{AxPrimitiveEther}, see \cite[p.46]{diss} and \cite[p.87]{ClassRelKin}, defines $E$ as follows:
\begin{equation}\label{E}
  \exists  v_0 \big[\mathsf{IOb}(v_0) \land \forall  v_1 \big(E(v_1)\leftrightarrow [\mathsf{IOb}(v_1)\land \varphi(v_0,v_1)]\big)\big],
\end{equation}
where $\mathsf{IOb}$ is a unary relation symbol represents inertial observers and $\varphi(v_0,v_1)$ is a formula in the language of \ax{SpecRel} capturing that observers $v_0$ and $v_1$ are stationary with respect to each other. In this proof, we only need that $\varphi(v_0,v_1)$ is a formula with two free variables in the language of $\ax{SpecRel}$; its concrete definition plays no rule here.  Let $$\ax{SpecRel^E}=\ax{SpecRel}\cup\{\ax{AxPrimitiveEther}\}.$$

First, we need to prove that $\ax{SpecRel}\leadsto \ax{SpecRel^E}$. To do so, it is enough to show that $\ax{SpecRel^E}$ is a conservative extension of $\ax{SpecRel}$, i.e., $\ax{SpecRel}\sqsubseteq\ax{SpecRel^E}$, because the languages of these theories differ only in the unary relation symbol $E$. So, we need to show that for any formula $\rho$ of the language of $\ax{SpecRel}$, 
\[
\ax{SpecRel}\models \rho \iff \ax{SpecRel^E}\models \rho.
\]  
Let $\rho$ be an arbitrary formula of the language of $\ax{SpecRel}$. Since $\ax{SpecRel}\subseteq \ax{SpecRel^E}$, $\ax{SpecRel}\models \rho$ implies $\ax{SpecRel^E}\models \rho$.  We prove the other direction by proving that, if $\ax{SpecRel} \not\models \rho$, then $\ax{SpecRel}^E \not\models \rho$. Let $\mathfrak{M}$ be a model of $\ax{SpecRel}$. Since $\ax{SpecRel} \models \exists v_0 \mathsf{IOb}(v_0)$,  there exists an $a\in \mathsf{IOb}^{\mathfrak{M}}$. Let us fix such element $a$
of $\mathsf{IOb}^{\mathfrak{M}}$ and let extension $\mathfrak{M}'$ of $\mathfrak{M}$ be defined by adding the following relation  to
$\mathfrak{M}$:
$$E^{\mathfrak{M}'}=\left\{b \in \mathsf{IOb}^{\mathfrak{M}}:\exists\text{ an assignment }\tau \big[\tau(v_0)=a,\tau(v_1)=b\text{ and }\mathfrak{M},\tau\models\varphi\big]\right\},$$
where $\varphi^{\mathfrak{M}}$ is the binary relation defined by formula $\varphi(v_0,v_1)$ in model $\mathfrak{M}$. By construction,  $\mathfrak{M}'$ is a model of $\ax{SpecRel^E}$.
Therefore, if $\mathfrak{M} \models \neg  \rho$, then $\mathfrak{M'}\models \neg \rho$, because $\mathfrak{M}'$ is an extension of $\mathfrak{M}$ means that $\Th(\mathfrak{M})\sqsubseteq\Th(\mathfrak{M}')$. Consequently, $\ax{SpecRel} \not\models \rho$ implies $\ax{SpecRel^E} \not\models \rho$, which is what we wanted to prove. This completes the proof of $\ax{SpecRel}\leadsto \ax{SpecRel^E}$, and hence $$\Cd_{\omega}^{\omega}(\ax{SpecRel},\ax{SpecRel^E})\le 1.$$ 
By Corollary 9 in \cite[p.72]{diss} and \cite[p.110]{ClassRelKin}, \ax{SpecRel^E} is definitionally equivalent to \ax{ClassicalKin}. Hence, 
$$\Cd_{\omega}^{\omega}(\ax{SpecRel^E}, \ax{ClassicalKin})=0.$$ 
Therefore, 
 $$\Cd_{\omega}^{\omega}(\ax{SpecRel},\ax{ClassicalKin})\le \Cd_{\omega}^{\omega}(\ax{SpecRel},\ax{SpecRel^E})+\Cd_{\omega}^{\omega}(\ax{SpecRel^E}, \ax{ClassicalKin})=1.$$
Moreover,  $\Cd_{\omega}^{\omega}(\ax{SpecRel},\ax{ClassicalKin})$ cannot be $0$ since $\ax{SpecRel}$ and $\ax{ClassicalKin}$ are not definitionally equivalent, see Theorem 5 in \cite{diss} or \cite{ClassRelKin}. 
Consequently, 
$$\Cd_{\omega}^{\omega}(\ax{SpecRel},\ax{ClassicalKin})=1$$ 
and thus the desired follows. 
\end{proof}
There are several ways how one can capture the structures of relativistic and classical kinematics in first order logic. Let us now
introduce another way to capture these theories. Let $\mathbb{R}$ be the set of all real numbers. Let $\mathsf{Ph}\subseteq\mathbb{R}^4\times\mathbb{R}^4$ be such that $(\bar{x},\bar{y})\in \mathsf{Ph}$ iff 
coordinate points $\bar x$ and $\bar y$ can be connected by a light signal, i.e., if $(x_1-y_1)^2-(x_2-y_2)^2-(x_3-y_3)^2-(x_4-y_4)^2=0$. Let $\mathsf{S}\subseteq\mathbb{R}^4\times\mathbb{R}^4$ be the simultaneity relation, i.e., $(\bar{x},\bar{y})\in S$ iff $x_1=y_1$. Consider the models $\mathfrak{R}=\langle\mathbb{R}^4,\mathsf{Ph}\rangle$ and $\mathfrak{N}=\langle\mathbb{R}^4,\mathsf{S}, \mathsf{Ph}\rangle$, these models capture the structure of special relativity and classical kinematics, respectively.

Let $T_n=\Th(\mathfrak{N})$ and $T_r=\Th(\mathfrak{R})$. Note that $T_n$ is in fact a conservative extension of $T_r$ and the conceptual distance between them is $1$, i.e., $\Cd_{\omega}^{\omega}(T_n,T_r)=1$. 

\begin{problem}[Hajnal Andr\'eka]Let $\X$ be the class of all theories $T$ such that $T_r$ is faithfully interpreted into $T$ and $T$ is faithfully interpreted into $T_n$. Is the following true: For all $T\in X$, $$\Cd_{\omega}^{\omega}(T_r,T)+\Cd_{\omega}^{\omega}(T,T_n)=1?$$  
\end{problem}

If the answer to the question in the above problem is yes, then no matter which classical (i.e., $T_n$-definable, but not $T_r$-definable) concept we add to special relativity ($T_r$) we will get classical kinematics ($T_n$). That would be an interesting insight for better understanding the connection between classical and relativistic concepts.

The investigation in this section opens so many questions: For any two concrete theories of physics, what is the conceptual distance between them? By Theorem~\ref{dsrck}, relativistic and classical kinematics are of conceptual distance one. However, the question ``what is the distance between relativistic and classical dynamics?'' remains open. Another natural related open problem is the following.
\begin{problem}[Jean Paul Van Bendegem]
What is the conceptual distance between classical and statistical thermodynamics?
\end{problem}

Of course, any answer to the above problems depends on the chosen axiomatizable theories capturing the physical theories in question. For an axiomatic approach of these thermodynamics theories, one can see, e.g., \cite{Caratheodory}, \cite{Cooper} and \cite{LiebXngvason}. 

\section{Ideas for other distances}\label{Other distances}
Interpreting a theory into another one is a fundamental concept in logic. In the following definition, we define some distance that uses faithful interpretations as the minimal step between theories. 

\begin{definition}For any two theories $T_1$ and $T_2$, we write $T_1 \ \mathbb{I} \ T_2$ iff one of these theories can be interpreted faithfully into the other one. Let $\X$ be an arbitrary class of theories in $\Lg_{\alpha}^{\beta}$. The  \emph{\textbf{faithful interpretation distance} on $\X$} is defined as the step distance on the cluster network $(\X,\equiv,\mathbb{I})$.
\end{definition}

Another natural idea one may have for defining a distance between theories is using a step that collapses two concepts into one, i.e., using the symmetric closure of following  relation for the minimal single steps. 

\begin{definition}Let $T,T'$ be two theories. We say that \emph{$T'$ is the resultant of $T$ after \textbf{collapsing two concepts}} iff $T'\equiv T\cup\{\varphi\leftrightarrow\psi\}$, for some $\varphi,\psi\in \Fm$.      
\end{definition}

\begin{example}The theory of abelian groups (in ordinary first order logic) is the resultant of the theory of all groups after collapsing the concepts $a\cdot b=c$ and $b\cdot a=c$.  
\end{example}

One can easily see that collapsing two concepts of a theory $T$ is a special case of adding an axiom to $T$. The converse is also true; adding an axiom $\varphi$ to $T$ is equivalent to collapsing the concept $\varphi$ with any theorem of $T$. So using the symmetric closure of the above relation for generating a distance will give the axiomatic distance (Definition~\ref{def-axiomatic-distance}).

\subsection{Dropping symmetry} In several cases, it might be natural not to assume the symmetry of distances between theories. For example, any inconsistent theory is understood to be of axiomatic distance $1$ from any consistent theory; we just need to add a contradiction as an axiom. But starting from an inconsistent theory, we can never reach a consistent theory by adding axioms; so considering this distance to be $\infty$ seems more natural.

Now, let us mimic the work of section~\ref{sd} under the consideration that symmetry is not required. For instance, a \emph{\textbf{directed cluster network}} is a triple $(\X,\E,\R)$, where $(\X,\E)$ is a cluster and $\R$ is an arbitrary relation on $\X$. For directed cluster network $\mathcal{X}=(\X,\E,\R)$, the \emph{\textbf{directed step distance}} $\ddd_{\mathcal{X}}:\X\times\X\rightarrow\NN\cup\{\infty\}$ can be defined completely analogously to step distance. With a very similar argument to the proof of Theorem~\ref{thm:min-path}, we can see that the following are true: For each $T_1,T_2,T_3\in \X$,  
\begin{enumerate}
\item[(a)] $\ddd_{\mathcal{X}}(T_1,T_2)\geq 0$ and $\ddd_{\mathcal{X}}(T_1,T_2)=0\iff T_1 \, \E \, T_2$.
\item[(b)] $\ddd_{\mathcal{X}}(T_1,T_2)\leq \ddd_{\mathcal{X}}(T_1,T_3)+ \ddd_{\mathcal{X}}(T_3,T_2)$.
\end{enumerate}

\begin{definition}Recall the axiom adding relation $\leftarrow$ introduced on page~\pageref{implication-theories} herein. Let $\X$ be a class of theories in logic $\Lg_{\alpha}^{\beta}$, then $(\X,\equiv,\leftarrow)$ is a directed cluster network. Its directed step distance is called the \emph{\textbf{directed axiomatic distance} on $\X$}.
\end{definition}

Clearly, Theorem~\ref{thm:axdist} is no longer true if we replace the axiomatic distance by the mimicked directed one. So, this is one of the situations where dropping the symmetry might be more interesting. Not just the axiomatic distance can be directed, but also the conceptual distance, faithful interpretation distance, and so on. It might be more appropriate to call these distances ``uni-directed distances'', indeed in the directed cluster network the single steps are determined by only one relation. We can also define \emph{bi-directed distances} or \emph{multi-directed distances} where the single steps can be determined by two or more relations. 

For example, it might be useful to introduce a ``new conceptual distance'' that measures the minimum number of concepts needed to be added to or removed from one theory to reach the other theory up to definitional equivalence. We already have a notion for concept adding (recall $\leadsto$ in Definition~\ref{def:conpext}). A precise definition for concept removal is also required. So, in this case, two relations will indicate the single steps and such new conceptual distance must be a bi-directed step distance. The notion of concept-removal below is inspired by the idea how the concept of faster-than-light observers were removed from the theory capturing classical kinematics in \cite{diss} and \cite{ClassRelKin}.

\begin{definition}\label{one-concept-removal}
Let $T,T^{-}$ be two theories. We say that $T^-$ is a \emph{\textbf{concept-removal} of $T$} and we write $T\gtrdot T^-$ iff there is $\varphi\in \Fm$ such that $T^-=T_m\cup\{\neg \varphi\}$ for some maximal consistent subtheorey of $\Cn(T)$ for which $T_m\not\models \varphi$. 
\end{definition}

In some cases, it is possible to remove finitely many concepts in only one step. For instance, assume that $\alpha=0$ and $\beta=1$, and let $\LL_1=\{P_1,P_2\}$ and $\LL_2=\{P_1,P_2,P_3,P_4\}$ for some sentential constants $P_1,P_2,P_3$ and $P_4$. For each $i\in\{1,2\}$, let $T_i$ be the empty theory on the language $\LL_i$. Recall the proof of Theorem~\ref{thm:non-discrete}, it was shown that we need exactly two steps of adding concepts to get $T_2$ from $T_1$. However, removing a single concept, namely $(P_2\not=P_3)\lor(P_3\not=P_4)$, from $T_2$ gives $T_1$.

\begin{definition}\label{directed-conceptual-distance}
Let $\X$ be a class of theories and let $\R$ be the union of the relations $\leadsto$ and $\gtrdot$. The \emph{\textbf{bi-directed conceptual distance} on $\X$} is the directed step distance on $(\X,\intertrans,\R)$. In the case when $\X$ is the class of all theories of logic $\Lg_{\alpha}^{\beta}$, we denote this distance by $\dCd_{\alpha}^{\beta}$.
\end{definition}

The facts, in the paragraph before the above definition, that $\dCd_{\alpha}^{\beta}(T_1,T_2)=2$ and $\dCd_{\alpha}^{\beta}(T_2,T_1)=1$ show that the bi-directed distance is not symmetric. Hence, conceptual and bi-directed conceptual distances are different, but it is still interesting to understand how much different they are. For example, the following problem is worth investigating.

\begin{problem}
Are there two theories $T$ and $T'$ such that 
\begin{enumerate} 
\item[(1)] $\Cd_{\alpha}^{\beta}(T,T')$, $\dCd_{\alpha}^{\beta}(T,T')$ and $\dCd_{\alpha}^{\beta}(T',T)$ are all finite,
\item[(2)] $\Cd_{\alpha}^{\beta}(T,T')\not=\dCd_{\alpha}^{\beta}(T,T')$ and  $\Cd_{\alpha}^{\beta}(T,T')\not=\dCd_{\alpha}^{\beta}(T',T)$?
\end{enumerate}
\end{problem}

It is not straightforward to find this example, each conceptual distance calculated in the present paper coincides with one of its corresponding bi-directed conceptual distances. In order to do the same with axiomatic distance, i.e., to define a \emph{bi-directed axiomatic distance}, all we need is a precise definition for theorem removal. We propose the following theorem-removal notion.

\begin{definition}
Let $T,T^{-}$ be two theories. We say that $T^-$ is a \emph{\textbf{theorem-removal} of $T$} iff there is $\varphi\in \Fm$ such that $T\models\varphi$ and $T^-$ is maximal consistent subtheorey of $\Cn(T)$ for which $T^-\not\models \varphi$.
\end{definition}

\section{Concluding philosophical remarks} One very important topic in the philosophy of science, is how different scientific theories can be compared to each other, especially in the case of competing theories. The first criterion for theory comparison is empirical adequacy. One theory is better than another if it accounts for more of the data or phenomena than another. In the past, this has often been, or has been presented as being, a fairly straightforward matter to decide by philosophers and historians.

This is oversimplified for two reasons. One is that sometimes in the history of science, one theory accounts for some of the data or phenomena very well. Another accounts for another area of data very well. They both agree on, but have different accounts of, the same data and both have failings. The comparison of two theories in terms of empirical adequacy requires that we count the data or the phenomena. Deciding what to count, and how to assign weight to it, has some arbitrariness to it. This is very well illustrated in \cite{chang} where he discusses the history of the competition between the phlogiston theory of water and the compound theory of water. With Chang, we conclude that deciding that one theory is more empirically adequate than another is not at all times, and in all circumstances, simple and straightforward, and with the fragmentation of science into more and more specialized areas of research, it is increasingly rare to find empirical adequacy to be enough to decide between competing theories. 

Worse: with more complicated and ``cutting-edge'' examples, we find that observation in science is not simple, but is informed by instruments, theory and language; making an observation is an informed and educated act. At the edges of science, we make observations using highly specialized instruments, which are constructed based on their own theories. Thus, what looked in the past to be a relatively simple judgment to make: ``this scientific theory is better than this other'' turns out to be rather subtle; since it requires individuation and assigning weights, in a way that is independent with respect to the theories themselves. 

Under an over simplified view of the unity of science, the subtlety threatens any pretense science has to objectivity, because what counts as a true and verifiable statement takes specialized instruments that we assume to work according to the theory we have of the instrument. If someone has an alternative account, then the explanation for the phenomenon changes. As a result, if we want to recover some semblance of objectivity in science, it is ever more pressing to receive confirmation of a theory from other directions independent of the theory.

In terms of objectivity, one reassuring feature of science is its precision. Logic is the most precise form of investigation. Under the pressure of our considerations above, when we have several logical theories that are each to some extent empirically adequate, it is not clear that we should retain one and dismiss the other. We then have pluralism in science. Pluralism in science is an obvious philosophical position when we consider that several theories are all more-or-less empirically adequate, and show merits with respect to other, incomparable, or only artificially comparable, virtues and vices. The virtues might include: simplicity (determined by language, proofs, metaphysical parsimony or concepts), meeting a particular goal of the scientists, neatness of categorization, breadth of categorisation, standardization of explanation or meeting operational opportunities and so on. Vices might include: complexity (determined by language, proofs, metaphysical elaborations that have little use within a theory or concepts), goal failure, messy or narrow categorization, non-standard explanations, being too ambitious and not going outside the constraints of the operations available. With this plethora of incommensurable theories all competing and each adequate in their own way and for their own purpose, we look elsewhere than between the theories in and of themselves and the data to make sense of the present state of science. What we then look for are other ways of comparing theories, while accepting them until such time as we come up against a good reason to give one up, such as: its being refuted by new evidence or being too remote from too many other theories to be worth pursuing (now).

For this reason, the relations between theories, independent of their relation to reality, becomes very important. Until now, this area of study has mostly been \textit{qualitative}. In the present paper, we explore a new \textit{quantitative} approach: the measured distance between theories. To establish this distance, we need to study the structure of the differences, i.e., the connections, between theories. By developing several such metrics based on our definitions, and noticing that some are less interesting than others, we already learn a lot. Counting axioms does not give us much information about the distance between theories. Counting concepts is much more subtle and informative.

One area of study that has a close relationship with the notion of conceptual distance is that of complexity. As we know, complexity, also can be measured in several ways: Turing complexity, in terms of the analytic hierarchy, and so on. If one theory is more complex than another in one of these measures, then it is natural to investigate the relationship between that and the distances we look at here. Some of the significance of the present work might be in its relationship to complexity theory. This is a subject of future investigation.

The idea of having a notion of distance between theories (of the same nature) seems applicable in any science. In computer science, programming languages and other systems can be seen as axiomatized theories. For more details about this, see, e.g., \cite{floyd}, \cite{hoare} and \cite{meyer}. Hence, it seems also natural to search for the best fit notion of equivalence between these theories. Developing this may give us insight to determine what can be one step difference between two such theories. Having these in mind, a distance can be then defined in the same way of section~\ref{sd} herein. The novelty here would be in choosing such equivalence and one step relation in a way that guarantees that the corresponding step distance is applicable.

\section*{Acknowledgement.} We shall thank Hajnal Andr\'eka, Jean Paul Van Bendegem, Zal\'an Gyenis, P\'eter Juh\'asz, Istv\'an N\'emeti and others for their fruitful comments and  questions. We are especially grateful for Judit X. Madar\'asz because of her insightful comments leading us to significantly improve the paper.

\bibliographystyle{apalike}
\bibliography{LogRel12017}

\address{
\begin{minipage}{0.5\textwidth}
MICH{\`E}LE FRIEND\\
Columbian College of Arts \& Sciences\\  
George Washington University\\
{\it E-mail}: michele@gwu.edu
\end{minipage}
\begin{minipage}{0.5\textwidth}
MOHAMED KHALED\\
Alfr{\' e}d R{\' e}nyi Institute of Mathematics\\
Hungarian Academy of Sciences\\
{\it E-mail}: khaled.mohamed@renyi.mta.hu
\end{minipage}
\vspace{0.5cm}\\
\begin{minipage}{0.5\textwidth}
KOEN LEFEVER\\
Centre for Logic and Philosophy of Science\\
Vrije Universiteit Brussel\\
{\it E-mail}: koen.lefever@vub.be
\end{minipage}
\begin{minipage}{0.5\textwidth}
GERGELY SZ{\' E}KELY\\
Alfr{\' e}d R{\' e}nyi Institute of Mathematics\\
Hungarian Academy of Sciences\\
{\it E-mail}: szekely.gergely@renyi.mta.hu
\end{minipage}
}
\end{document}